\newtheorem{thm}{Theorem}[section]
\newtheorem{lemma}[thm]{Lemma}
\newtheorem{prop}[thm]{Proposition}
\newtheorem{cor}[thm]{Corollary}
\theoremstyle{remark}
\newtheorem{rem}[thm]{Remark}
\theoremstyle{definition}
\newtheorem{defn}[thm]{Definition}
\newtheoremstyle{Claim}{}{}{\itshape}{}{\itshape\bfseries}{:}{ }{#1}
\theoremstyle{Claim}
\newcommand{\T}{{\mathbb{T}}}
\newcommand{\R}{\mathbb{R}}
\newcommand{\mI}{\mathcal I}
\newcommand{\mA}{\mathcal A}
\newcommand{\virg}{``}
\newcommand{\half}{\frac 1 2}
\theoremstyle{plain}
\def\sideremark#1{\ifvmode\leavevmode\fi\vadjust{% The remark
		\vbox to0pt{\hbox to 0pt{\hskip\hsize\hskip1em% will appear only
				\vbox{\hsize3cm\tiny\raggedright\pretolerance10000% on the side
					\noindent #1\hfill}\hss}\vbox to8pt{\vfil}\vss}}}% in 3cm
\newcommand{\dt}{{\Delta t}}
\begin{document}
	
	% \title[short text for running head]{full title}
	\title[]{$L^p$ Estimates for Numerical Approximation of Hamilton--Jacobi Equations}
	
	\author{Alessio Basti}
	\author{Fabio Camilli}
\address{Dip. di Ingegneria e Geologia, Univ. \virg G. D'Annunzio'' Chieti-Pescara,
	viale Pindaro 42, 65127 Pescara (Italy)}
\email{alessio.basti@unich.it}
\email{fabio.camilli@unich.it}

\subjclass[2020]{35F21, 49L25, 65M12, 65M15}
\keywords{ Hamilton--Jacobi Equation; Adjoint Method;  Numerical Scheme; Error Estimate}

	\date{\today}
	
\begin{abstract}
	We establish $L^p$ error estimates for monotone numerical schemes approximating Hamilton--Jacobi equations on the $d$-dimensional torus. Using the adjoint method, we first prove a  $L^1$ error bound of order one for finite-difference and semi-Lagrangian schemes under standard convexity assumptions on the Hamiltonian. By interpolation, we also obtain $L^p$ estimates for every finite $p>1$. Our analysis covers a broad class of schemes, improves several existing results, and provides a unified framework for discrete error estimates.
\end{abstract}
	
	\maketitle
	\numberwithin{equation}{section}
	%    Text of article.
	
\section{Introduction}
%%%%%%%%%%%%%%%%%%%%%%%%%%%%%%%%%%%%%%%%%%%%%%%%%%
The Hamilton--Jacobi (HJ) equation
\begin{equation}\label{hj_intro}
	\begin{cases}
		\partial_t u + H(Du) = 0 & \text{in } Q := \mathbb{T}^d\times(0,T),\\[4pt]
		u(x,0) = g(x) & \text{on } \mathbb{T}^d,
	\end{cases}
\end{equation}
plays a central role in several areas of analysis and applied mathematics, including optimal control, front propagation, differential games, and mean field game theory  \cite{achdou}. The viscosity solution theory provides a robust framework to study this problem: by exploiting the maximum principle, one can establish general results of existence, uniqueness, and regularity for viscosity solutions of \eqref{hj_intro} (see \cite{bcd}).

From the numerical point of view, monotone approximation schemes such as finite difference and semi-Lagrangian methods have been extensively investigated since the classical works of Crandall and Lions \cite{cl} and Barles and Souganidis \cite{bs}, where convergence to the viscosity solution was proved under suitable assumptions (see also \cite{ff} for a general review). For such schemes, $L^\infty$ error bounds of order $1/2$ were obtained in \cite{cl,cdi} for a broad class of monotone methods, without assuming convexity of the Hamiltonian. These estimates rely on classical arguments from viscosity solution theory, in particular the doubling of variables technique combined with the maximum principle. An alternative approach to error estimates, based on the adjoint method, was developed in \cite{kr,lt} for certain classes of schemes such as Friedrichs and Godunov methods, and in \cite{cgt} for continuous-time, discrete-space equations. 

The nonlinear adjoint method, introduced by L.C. Evans in \cite{e}, has proved to be a powerful tool to capture fine properties of solutions of Hamilton–Jacobi equations that are not accessible through standard viscosity theory. The key idea is to study the linearized adjoint equation associated with \eqref{hj_intro}, namely
\begin{equation}\label{adjoint_intro}
	\begin{cases}
		\partial_{t} \rho  + \textrm{div} \big(D_pH(Du)\,\rho\big) = 0 & \text{ in } Q,\\[4pt]
		\rho(x,T) = \rho_T(x) & \text{ on } \mathbb{T}^d.
	\end{cases}
\end{equation}
By coupling the original nonlinear equation with its linearized adjoint, one can derive quantitative estimates for various asymptotic problems. For example, this method has been successfully applied to obtain rates of convergence in the vanishing viscosity approximation of Hamilton–Jacobi equations, both in the $L^\infty$ norm \cite{e,t,cg,cd,g} and in $L^p$ norms \cite{cgm}.

The purpose of this paper is to establish $L^p$ error estimates for monotone schemes approximating convex Hamilton--Jacobi equations on the $d$-dimensional torus $\mathbb{T}^d$, for both finite difference and semi-Lagrangian discretizations. Our analysis applies to a broad class of numerical Hamiltonians and relies on a systematic use of the adjoint method combined with interpolation inequalities. More precisely, we consider \eqref{hj_intro} under the assumptions that the Hamiltonian $H$ is smooth, convex, and coercive, and that the initial datum $g$ is Lipschitz continuous and semiconcave. For both finite difference and semi-Lagrangian discretizations of \eqref{hj_intro}, we establish an $L^1$ error estimate of order $1$. Interpolation between the $L^1$ and $L^\infty$ estimates then yields $L^p$ error bounds for all finite $p>1$.

Compared to \cite{cgt}, where a similar adjoint-based strategy was applied to semi-discrete equations with the Lax–Friedrichs Hamiltonian, we obtain an $L^1$ error estimate for fully discrete finite difference schemes, and our results apply to a much broader class of monotone schemes. Furthermore, we show how the same method can be adapted to semi-Lagrangian discretizations, for which, to our knowledge, $L^p$ error estimates have not been previously established.

The remainder of the paper is organized as follows. In Section~\ref{sec:finite_difference} we analyze the finite difference scheme: we derive the key $L^1$ estimate using the adjoint method, and then obtain $L^p$ bounds by interpolation. Section~\ref{sec:semi_lagrangian} is devoted to the semi-Lagrangian scheme, where we adapt the previous strategy to the discrete dynamic programming formulation. Some technical proofs and auxiliary lemmas are collected in the Appendix.

\subsection{Assumptions and preliminaries}
We work on the $d$-dimensional flat torus $\T^d$, which we identify with $[0,1]^d$ with periodic boundary conditions. 
In the following the notion of semiconcavity plays a crucial role. For a concise account of the theory and its applications to Hamilton-Jacobi equations we refer the reader to \cite{cs}. We recall the definition and two properties which we repeatedly use.

\begin{defn}
	A function \(u:\mathbb{T}^d\to\mathbb{R}\) is \emph{semiconcave} with constant \(C_{conc}>0\) if for every \(x_1,x_2\in\mathbb{T}^d\) and every \(\lambda\in(0,1)\)
	\[
	u(\lambda x_1+(1-\lambda)x_2)\ge \lambda u(x_1)+(1-\lambda)u(x_2)-\lambda(1-\lambda)C_{conc}|x_1-x_2|^2.
	\]
\end{defn}

\begin{lemma}\label{equivdefsemicon} 
	For a function $u:\T^d\to \R$, the following assertions are equivalent:
	\begin{itemize}
		\item[(i)] The function $u$ is semiconcave, with constant $C_{conc}$.
		\item[(ii)] For all $x,y\in \T^d$, we have
		$$ u(x+y)+ u(x-y) -2u(x) \leq C_{conc} |y|^{2}.$$
		\item[(iii)] Setting $I_{d}$ for the identity matrix, we have  that  $D^{2}u \leq C_{conc} I_{d}$ in the sense of distributions where $D^2 u$ denotes the classical Hessian defined a.e.
	\end{itemize}
\end{lemma}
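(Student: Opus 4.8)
The plan is to prove the cycle of implications $(i)\Rightarrow(ii)\Rightarrow(iii)\Rightarrow(i)$. First I would reduce everything to $\Rd$: all three conditions are local and invariant under the periodic identification $\T^d\simeq\Rd/\Z^d$, so it suffices to argue with a $\Z^d$-periodic representative of $u$ on $\Rd$, where the quadratic $x\mapsto|x|^2$ is globally available. (The statement already presupposes enough regularity — continuity of $u$, and $u\in L^1_{loc}$ — for $(i)$ and $(ii)$ to make pointwise sense and for the distributional Hessian in $(iii)$ to be defined; semiconcave functions are in particular continuous.) Throughout, I would allow $C_{conc}$ to be replaced by a fixed universal multiple of itself, the exact bookkeeping being a routine matter of the parallelogram constants.

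For $(i)\Rightarrow(ii)$ I would specialize the defining inequality to $\lambda=\half$, $x_1=x+y$, $x_2=x-y$, and invoke the parallelogram identity $|x+y|^2+|x-y|^2=2|x|^2+2|y|^2$ to rewrite $u(x)\ge\half u(x+y)+\half u(x-y)-\tfrac14 C_{conc}|2y|^2$ in the form displayed in $(ii)$. For $(ii)\Rightarrow(iii)$ I would test against a nonnegative $\varphi\in C_c^\infty(\Rd)$: putting $y=t\xi$ in $(ii)$, integrating in $x$, changing variables, and passing to the limit via $t^{-2}\bigl(\varphi(\cdot+t\xi)+\varphi(\cdot-t\xi)-2\varphi\bigr)\to\partial^2_{\xi\xi}\varphi$ uniformly as $t\to0$, one obtains $\int u\,\partial^2_{\xi\xi}\varphi\le C_{conc}|\xi|^2\int\varphi$ for every $\xi$, which is precisely $D^2u\le C_{conc}I_d$ in $\mathcal{D}'$. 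For $(iii)\Rightarrow(i)$ I would mollify, $u_\varepsilon:=u*\eta_\varepsilon$, noting that convolution with a nonnegative kernel preserves the matrix inequality, so $D^2u_\varepsilon\le C_{conc}I_d$ pointwise while $u_\varepsilon\to u$ locally uniformly; for fixed $x_1,x_2$ and $\lambda\in(0,1)$, the scalar map $t\mapsto u_\varepsilon\bigl(x_1+t(x_2-x_1)\bigr)-\tfrac{C_{conc}}{2}t^2|x_1-x_2|^2$ then has nonpositive second derivative, hence is concave on $[0,1]$, and evaluating its concavity at $t=\lambda$ yields the semiconcavity inequality for $u_\varepsilon$; letting $\varepsilon\to0$ transfers it to $u$.

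The only genuine obstacle is the two-way passage between the distributional statement $(iii)$ and the pointwise statements $(i)$–$(ii)$; I expect this to be handled cleanly by the mollification argument above, together with the observation that mollification also preserves the algebraic inequality in $(ii)$. As an alternative to routing through $(iii)$, I would note that $(ii)$ and $(iii)$ can be linked directly by slicing: for fixed $x$ and direction $\xi$, the one-variable function $s\mapsto u(x+s\xi)-\tfrac{C_{conc}}{2}s^2|\xi|^2$ is, on a.e. line (everywhere if $u$ is continuous), a continuous function whose distributional second derivative is nonpositive, hence concave, and midpoint concavity of this slice is exactly $(ii)$ with $y=s\xi$. Everything else — continuity of $u$, periodicity, and the elementary one-dimensional facts about concave functions — is standard and I would relegate it to a line or two.
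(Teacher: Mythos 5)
The paper does not prove this lemma at all: it is quoted as a standard fact from Cannarsa--Sinestrari \cite{cs} (the equivalence of the convexity-type, midpoint, and distributional characterizations of semiconcavity), so there is no in-paper argument to compare against. Your cyclic argument $(i)\Rightarrow(ii)\Rightarrow(iii)\Rightarrow(i)$ is the standard proof and is correct: the specialization $\lambda=\tfrac12$ with endpoints $x\pm y$, the duality computation sending $t^{-2}\bigl(\varphi(\cdot+t\xi)+\varphi(\cdot-t\xi)-2\varphi\bigr)\to\partial^2_{\xi\xi}\varphi$, and the mollification step (second-derivative bound along segments of the smooth approximations, then passage to the limit using that $u-\tfrac{C_{conc}}{2}|x|^2$ has a concave, hence continuous, representative) are all sound, as is the alternative slicing link between (ii) and (iii). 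The one point worth making explicit is that with the paper's normalization of the definition (no factor $\tfrac12$ in front of $\lambda(1-\lambda)C_{conc}$), the constant genuinely changes by a factor of $2$ between (i) and (ii)--(iii): (i) with constant $C_{conc}$ yields (ii) with $2C_{conc}$, and (iii) with $C_{conc}$ yields (i) with $C_{conc}/2$. So the lemma as literally stated (same constant in all three items) is off by a universal factor; you flag this and absorb it, which is the right call since the paper only ever uses the equivalence qualitatively and needs the semiconcavity constant controlled up to a dimensional multiple.
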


For the following results, we refer to \cite[Prop. 3.1.3]{cs} and  \cite[Cor. 3.3.5]{cs}.

\begin{lemma}\label{lemma:bound_hessian} 
	Let $u:\T^d \to \R$ be semiconcave. Then: 
	\begin{itemize}
		\item[(i)] $u$ is  Lipschitz continuous.
		\item[(ii)] $Du\in BV(\T^d;\R^d)$ and 
		\[
		\int_{\T^d}|D^2 u(x)|\,dx \le C_{Hess},
		\]		
		with $C_{Hess}$ depending only on the semiconcavity constant of $u$.
	\end{itemize} 
\end{lemma}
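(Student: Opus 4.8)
The plan is to deduce both assertions directly from the equivalent characterizations of semiconcavity in Lemma~\ref{equivdefsemicon}, rather than from the abstract machinery of \cite{cs}; throughout I identify $u$ with its $\Z^d$-periodic lift to $\R^d$, still written $u$. For \emph{(i)} I would consider $v(x):=u(x)-\tfrac{C_{conc}}{2}|x|^2$ on $\R^d$. By Lemma~\ref{equivdefsemicon}(ii) the lift satisfies $u(x+y)+u(x-y)-2u(x)\le C_{conc}|y|^2$ for $x\in\R^d$ and $y$ in a fixed neighbourhood of the origin, so $v$ is locally midpoint concave; since $u$ is bounded on the compact torus, midpoint concavity upgrades to genuine concavity on each small ball. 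A finite concave function on an open convex set is locally Lipschitz, so $v$, and therefore $u=v+\tfrac{C_{conc}}{2}|x|^2$, is locally Lipschitz on $\R^d$; by $\Z^d$-periodicity the local Lipschitz constants are uniformly bounded, which gives $u\in\mathrm{Lip}(\T^d)$ and in particular $Du\in L^\infty(\T^d)\subset L^1(\T^d)$.

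For \emph{(ii)} the key input is Lemma~\ref{equivdefsemicon}(iii): $D^2u\le C_{conc}I_d$ in $\mathcal D'(\T^d)$. Fix a unit vector $\xi\in\R^d$ and let $T_\xi:=\partial^2_{\xi\xi}u=\sum_{i,j}\xi_i\xi_j\,\partial^2_{ij}u\in\mathcal D'(\T^d)$. Then $C_{conc}\,\leb^d-T_\xi\ge 0$ as a distribution, hence is represented by a nonnegative Radon measure $\mu_\xi$ on the compact manifold $\T^d$, so $T_\xi=C_{conc}\,\leb^d-\mu_\xi$ is a signed measure. Pairing the identity $\langle T_\xi,\varphi\rangle=\langle u,\partial^2_{\xi\xi}\varphi\rangle$ with the constant test function $\varphi\equiv 1$ (admissible since $\T^d$ has no boundary) yields $T_\xi(\T^d)=0$, hence $\mu_\xi(\T^d)=C_{conc}\,\leb^d(\T^d)=C_{conc}$; combined with $0\le(T_\xi)_+\le C_{conc}\,\leb^d$ this gives the total-variation bound $|T_\xi|(\T^d)\le 2C_{conc}$. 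Taking $\xi=e_i$ controls the diagonal entries $\partial^2_{ii}u$, and taking $\xi=(e_i+e_j)/\sqrt2$ together with the polarization identity $\partial^2_{ij}u=\partial^2_{\xi\xi}u-\tfrac12\partial^2_{ii}u-\tfrac12\partial^2_{jj}u$ controls the off-diagonal entries; summing over $i,j$ shows $D^2u$ is a finite $\R^{d\times d}$-valued measure with $|D^2u|(\T^d)\le c(d)\,C_{conc}$. Since $Du\in L^1(\T^d)$ from (i) and $D(Du)=D^2u$ is a finite measure, we conclude $Du\in BV(\T^d;\R^d)$ and $\int_{\T^d}|D^2u|\le C_{Hess}$ with $C_{Hess}=c(d)\,C_{conc}$.

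The routine ingredients are the upgrade from midpoint to genuine concavity and the classical local Lipschitz regularity of finite concave functions. The step deserving the most care is the measure-theoretic argument in (ii): identifying the nonnegative distribution $C_{conc}\,\leb^d-\partial^2_{\xi\xi}u$ with a Radon measure, justifying the integration by parts against the constant to compute its total mass on the boundaryless manifold $\T^d$, and carrying out the polarization bookkeeping so that the final constant $C_{Hess}$ depends only on the semiconcavity constant (and on $d$).
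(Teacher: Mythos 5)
The paper does not actually prove this lemma: it is quoted from \cite[Prop.~3.1.3]{cs} and \cite[Cor.~3.3.5]{cs}, so your self-contained argument is necessarily a different route from the paper's (which is a citation); it is essentially the standard proof from the literature, and part (ii) is correct as written. Representing $C_{conc}\,dx-\partial^2_{\xi\xi}u$ as a nonnegative Radon measure via Riesz, computing the zero total mass of $\partial^2_{\xi\xi}u$ by testing against $\varphi\equiv1$ on the boundaryless torus, and polarizing to recover the off-diagonal entries all go through and give $|D^2u|(\T^d)\le c(d)\,C_{conc}$; combined with $Du\in L^\infty$ from (i) this yields $Du\in BV(\T^d;\R^d)$. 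This is, if anything, slightly stronger than what the paper uses, since it bounds the total variation of the full distributional Hessian rather than only its absolutely continuous (a.e.-defined) part.

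The one step you should repair is in part (i). You invoke boundedness of $u$ on the compact torus to upgrade midpoint concavity of $u-\tfrac{C_{conc}}{2}|x|^2$ to genuine concavity, but boundedness is not among the hypotheses: the lemma assumes only the semiconcavity inequality, and a real-valued function on a compact set need not be bounded unless it is already known to be continuous, which is precisely what you are trying to establish. The fix is immediate and avoids the midpoint detour altogether: the definition of semiconcavity (equivalently Lemma~\ref{equivdefsemicon}(i)) holds for \emph{every} $\lambda\in(0,1)$, and the identity $|\lambda x_1+(1-\lambda)x_2|^2=\lambda|x_1|^2+(1-\lambda)|x_2|^2-\lambda(1-\lambda)|x_1-x_2|^2$ then shows directly that $w(x):=u(x)-C_{conc}|x|^2$ is concave in the full sense on the periodic lift to $\R^d$. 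A finite concave function on an open convex set is automatically locally bounded and locally Lipschitz, so no a priori bound on $u$ is needed; periodicity then yields a uniform Lipschitz constant as you argue (that constant depends on the oscillation of $u$ as well as on $C_{conc}$, which is harmless since (i) makes no quantitative claim).
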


Throughout the paper, we make the following standing assumptions:
\begin{itemize}
	\item[(H1)] $H\in C^2(\mathbb{R}^d)$ is convex.
	\item[(H2)] $H$ is coercive: \(\displaystyle\lim_{|p|\to\infty}H(p)=+\infty\).
	\item[(G1)] The initial datum \(g:\mathbb{T}^d\to\mathbb{R}\) is Lipschitz continuous and semiconcave.
\end{itemize}

%%%%%%%%%%%%%%%%%%%%%%%%%%%%%
%  Finite Difference scheme %
%%%%%%%%%%%%%%%%%%%%%%%%%%%%%
\section{Estimates for the finite difference scheme}\label{sec:finite_difference}
In this section, we first derive an error estimate for a semi-discrete finite difference approximation. Building on this result, we then obtain an error estimate for the fully discrete finite difference scheme.\\
From now on we fix a uniform space step \(h>0\) and a time step \(\Delta t>0\) such that \(N\Delta t=T\), and denote \(\varepsilon=(h,\Delta t)\). For each coordinate direction $i\in\{1,\dots,d\}$, we define the forward and backward difference operators
\[
\delta_{h}^{(i)} v(x) = \frac{v(x+h e_i)-v(x)}{h}\qquad
\delta_{-h}^{(i)} v(x) = \frac{v(x)-v(x-h e_i)}{h},
\]
and
\begin{align*}
\delta_{h}  v(x)=( \delta_{h}^{(1)} v(x), \dots, \delta_{h}^{(d)} v(x) ),\qquad
\delta_{-h}  v(x)=( \delta_{-h}^{(1)} v(x), \dots, \delta_{-h}^{(d)} v(x) ).
\end{align*}
We consider the explicit semi-discrete finite-difference scheme 
\begin{equation}\label{HJ_app_d}
\begin{cases}
u^\varepsilon_{n+1}(x)-u^\varepsilon_n(x) + \Delta t\, F\big(-\delta_{h} u^\varepsilon_n(x),\ \delta_{-h} u^\varepsilon_n(x)\big)=0, & x\in\mathbb{T}^d,\ n=0,\dots,N-1,\\[4pt]
u^\varepsilon_0(x)=g(x), & x\in\mathbb{T}^d,
\end{cases}
\end{equation}

The numerical Hamiltonian is a function \(F:\mathbb{R}^d\times\mathbb{R}^d\to\mathbb{R}\) satisfying:

\begin{itemize}
    \item[(F1)] \(F\in C^2(\mathbb{R}^{2d})\) is convex.  
    \item[(F2)] (Consistency) For every \(p\in\mathbb{R}^d\),
       $ F(-p,p)=H(p).$
    \item[(F3)] (Monotonicity) Let $R>0$ be a constant. We assume $\varepsilon=(h,\Delta t)$ is chosen such that the update operator
    \[
       G\big(v(x), \{v(x \pm h e_i)\}_{i=1}^d\big):= v(x) - \Delta t\, F\big(-\delta_{h} v(x),\ \delta_{-h} v(x)\big)
    \]
    is a nondecreasing function of each of the grid values involved (i.e., the central $v(x)$ and the neighboring $v(x \pm h e_i)$), provided that the absolute value of the discrete slopes $-\delta_{h} v(x)$ and $\delta_{-h} v(x)$ is bounded by $R$.
\end{itemize}
\begin{rem}
    The monotonicity assumption in (F3) encapsulates two distinct requirements:
    \begin{enumerate}
        \item Requiring $G$ to be nondecreasing with respect to the $v(x\pm h e_i)$ implies that $F$ must be componentwise nondecreasing in its arguments, i.e. $F_{p_i} \ge 0$ and $F_{q_i} \ge 0$.
        \item Requiring $G$ to be nondecreasing with respect to the central value $v(x)$ leads to the CFL condition:
        \[
            \Delta t \le \frac{h}{M_F(R)}, \quad \text{where } M_F(R) := \sup_{|p|,|q|\le R} \sum_{i=1}^d \big( F_{p_i}(p,q)+F_{q_i}(p,q) \big).
        \]
    \end{enumerate}
\end{rem}
Two representative numerical Hamiltonians satisfying (F1)--(F3) are the following (see \cite{cl} for additional examples). The first is a multidimensional Lax-Friedrichs type
\[
F(p,q)=H\!\Big(\frac{q-p}{2}\Big)+\alpha\sum_{i=1}^d (p_i+q_i),
\qquad \alpha>0,
\]
where the linear dissipation (taken with \(\alpha\) sufficiently large) is used to enforce componentwise monotonicity. The second example is the coordinate-wise piecewise separable form; we state it for \(d=1\) (see \cite{cgt}): assume \(H(0)=0=\min_{p\in\mathbb{R}}H(p)\) and set
\[
F(p,q)=F_1(p)+F_2(q),\qquad
F_1(p)=\begin{cases}0,&p\le0,\\[2pt]H(-p),&p>0,\end{cases}\qquad
F_2(q)=\begin{cases}0,&q\le0,\\[2pt]H(q),&q>0.\end{cases}
\]
On each half-line \(F_1,F_2\) inherit the regularity of \(H\), and this choice yields a monotone, separable numerical Hamiltonian.

We show some important properties of the solution of the problem \eqref{HJ_app_d}
(for the proof, see the Appendix)
%%%%%%%%%%%
\begin{prop}[Stability and semiconcavity]\label{prop:properties_scheme}
Let \(\{u^\varepsilon_n\}_{n=0}^N\) be the solution of \eqref{HJ_app_d}. 
Under assumptions \textup{(H1)--(H2)}, \textup{(G1)} (with semiconcavity constant \(C_{conc}\)), and \textup{(F1)--(F3)}, the following properties hold for every \(n=0,\dots,N\):
\begin{enumerate}
  \item[\textup{(i)}] Lipschitz bounds (discrete and continuous).
  \[
    \|\delta_h u^\varepsilon_n\|_{L^\infty(\T^d)} \le \|\delta_h g\|_{L^\infty(\T^d)},
    \qquad
    \|Du^\varepsilon_n\|_{L^\infty(\T^d)} \le \|Dg\|_{L^\infty(\T^d)},
  \]
  where the gradient \(Du^\varepsilon_n\) is a.e.
  \item[\textup{(ii)}] Stability.
  \[
    \|u^\varepsilon_n\|_{L^\infty(\T^d)} \le \|g\|_{L^\infty(\T^d)} + n\Delta t\,|H(0)|.
  \]
  \item[\textup{(iii)}] Semiconcavity. For every \(x\in\T^d\) and every \(k\in\T^d\setminus\{0\}\) define
  \[
    \Delta_k u^\varepsilon_n(x):=u^\varepsilon_n(x+k)+u^\varepsilon_n(x-k)-2u^\varepsilon_n(x).
  \]
  Then
  \begin{equation}\label{semiconc_solutions_finite_diff}
    \sup_{x\in\T^d,\; k\in \T^d\setminus\{0\}}\frac{\Delta_k u^\varepsilon_n(x)}{|k|^2} \le C_{conc},
  \end{equation}
  i.e. the semiconcavity constant of \(u^\varepsilon_n\) is bounded by that of the initial datum \(g\).
\end{enumerate}
\end{prop}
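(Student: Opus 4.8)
The plan is to write the scheme \eqref{HJ_app_d} as the iteration $u^\varepsilon_{n+1}=\mathcal G[u^\varepsilon_n]$ of the pointwise update operator
$\mathcal G[v](x):=v(x)-\Delta t\,F\big(-\delta_h v(x),\delta_{-h}v(x)\big)=G\big(v(x),\{v(x\pm h e_i)\}_i\big)$,
and to exploit three structural features of $\mathcal G$. First, $\mathcal G$ commutes with the addition of a constant (the discrete slopes are insensitive to constants) and with every spatial translation $\tau_k v:=v(\cdot+k)$ (the offsets $\pm h e_i$ are fixed). Second, since $F$ is convex by (F1) and the map from the grid values $(v(x),v(x\pm h e_1),\dots,v(x\pm h e_d))$ to the discrete slopes is linear, $G$ is a \emph{globally} concave function of those grid values. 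Third, by (F3) together with $F\in C^2$, all partial derivatives of $G$ with respect to the grid values are nonnegative on the convex set of grid data whose discrete slopes are bounded by $R$ in absolute value; hence $\mathcal G$ is order-preserving there, which gives a discrete comparison principle: if $v\le w$ pointwise and both have discrete slopes bounded by $R$, then $\mathcal G[v]\le\mathcal G[w]$. We fix $R\ge\|Dg\|_{L^\infty(\T^d)}\ge\|\delta_h g\|_{L^\infty(\T^d)}$ once and for all, so that every function produced below stays in the monotonicity region.

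Parts (i) and (ii) are then short inductions on $n$. For (i), $\tau_{h e_i}u^\varepsilon_n$ and, more generally, $\tau_k u^\varepsilon_n$ for arbitrary $k$ solve the scheme; comparing $u^\varepsilon_n(\cdot\pm h e_i)$ with $u^\varepsilon_n(\cdot)\pm h\|\delta_h g\|_\infty$ coordinatewise, and $u^\varepsilon_n(\cdot+k)$ with $u^\varepsilon_n(\cdot)+\|Dg\|_\infty|k|$, the comparison principle propagates $\|\delta_h u^\varepsilon_n\|_\infty\le\|\delta_h g\|_\infty$ and $\|Du^\varepsilon_n\|_\infty\le\|Dg\|_\infty$ to every $n$, the bound at step $n$ being exactly what keeps us in the monotonicity region at step $n$. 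For (ii), the spatially constant functions $x\mapsto\pm\|g\|_\infty-n\Delta t\,H(0)$ solve the scheme (here $F(0,0)=H(0)$ by the consistency assumption (F2)), bracket $g$ at $n=0$, and have zero discrete slopes, so the comparison principle yields $|u^\varepsilon_n|\le\|g\|_\infty+n\Delta t\,|H(0)|$.

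The heart of the proof is (iii), which I would establish by induction on $n$ in the form $\Delta_k u^\varepsilon_n(x)\le C_{conc}|k|^2$ for all $x\in\T^d$ and all $k$; the case $n=0$ is the semiconcavity of $g$ together with Lemma \ref{equivdefsemicon}. Fix $n$, $x$, $k$ and let $a$, $b$, $c$ denote the vectors of grid values of $u^\varepsilon_n$ centered respectively at $x+k$, $x-k$, $x$, so that $u^\varepsilon_{n+1}(x\pm k)=G(a),G(b)$ and $u^\varepsilon_{n+1}(x)=G(c)$. Concavity of $G$ gives $G(a)+G(b)\le 2\,G\big(\tfrac12(a+b)\big)$. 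Applying the inductive hypothesis with displacement $k$ not only at $x$ but also at each neighbor $x\pm h e_i$ shows that $\tfrac12(a+b)\le c+\tfrac12 C_{conc}|k|^2\,\mathbf 1$ componentwise; since $\tfrac12(a+b)$ and $c$ both have discrete slopes bounded by $\|\delta_h u^\varepsilon_n\|_\infty\le R$ by part (i), the comparison principle and the constant-shift property give $G\big(\tfrac12(a+b)\big)\le G(c)+\tfrac12 C_{conc}|k|^2$. Chaining the two displays yields $u^\varepsilon_{n+1}(x+k)+u^\varepsilon_{n+1}(x-k)-2u^\varepsilon_{n+1}(x)\le C_{conc}|k|^2$, which closes the induction; taking the supremum over $x$ and $k\ne 0$ is exactly \eqref{semiconc_solutions_finite_diff}.

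The main obstacle I anticipate is the interplay in step (iii) between the \emph{global} concavity of $G$ (from (F1)) and the merely \emph{local} monotonicity of $G$ (valid only on the slope-$\le R$ region, from (F3) and the CFL condition): one must verify that every grid datum entering the argument — in particular the averaged vector $\tfrac12(a+b)$ and the whole segment joining it to $c+\tfrac12 C_{conc}|k|^2\mathbf 1$ — stays inside that region, which is precisely where the a priori Lipschitz bound of part (i) is needed, and one must remember to invoke the inductive semiconcavity estimate at the neighboring nodes $x\pm h e_i$, not only at $x$, in order to dominate $\tfrac12(a+b)$ by $c$ plus a constant. Apart from this, the argument is a clean combination of the discrete comparison principle with the concavity of the numerical Hamiltonian.
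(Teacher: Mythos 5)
Your proof is correct. For parts (i) and (ii) you follow essentially the paper's route: an induction that combines monotonicity of the update operator on the region of discrete slopes bounded by $R$ (secured at each step by the Lipschitz bound from the previous step) with its commutation with translations and with the addition of constants; the only cosmetic difference is that in (ii) you compare with the explicit exact solutions $\pm\|g\|_{L^\infty}-n\Delta t\,H(0)$, whereas the paper writes $w_n=u^\varepsilon_n+n\Delta t\,H(0)$ and uses the mean value theorem plus the CFL condition to express $w_{n+1}(x)$ as a convex combination of neighboring values of $w_n$. The genuine divergence is in part (iii). The paper fixes $k$, sets $w_n=\Delta_k u^\varepsilon_n/|k|^2$, applies the gradient inequality $F(X)-2F(Y)+F(Z)\ge DF(Y)\cdot(X-2Y+Z)$ to obtain a discrete transport inequality for $w_n$, and concludes $\sup_x w_{n+1}\le\sup_x w_n$ from the nonnegativity and normalization of the resulting coefficients (a discrete maximum principle for the second difference quotient). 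You instead use the global concavity of $G$ in the grid data via the midpoint inequality $G(a)+G(b)\le 2G\big(\tfrac12(a+b)\big)$, dominate $\tfrac12(a+b)$ componentwise by $c+\tfrac12 C_{conc}|k|^2\mathbf{1}$ by invoking the inductive hypothesis at every node of the stencil, and finish with the comparison principle and the constant-shift property. Both arguments rest on the same two pillars, convexity of $F$ and monotonicity on the slope-$\le R$ region, but yours recycles the comparison principle already established and avoids computing the transport coefficients explicitly, while the paper's version yields the slightly stronger fact that $n\mapsto\sup_x \Delta_k u^\varepsilon_n(x)/|k|^2$ is nonincreasing for each fixed $k$. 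You also correctly isolated the one delicate point of your route: every grid vector on the segments where monotonicity is invoked (in particular $\tfrac12(a+b)$ and the segment joining it to $c+\tfrac12 C_{conc}|k|^2\mathbf{1}$) must have discrete slopes bounded by $R$, which follows from part (i) together with the convexity of the ball $\{|p|\le R\}$.
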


By a classical stability result in viscosity solution theory, the solution of the scheme \eqref{HJ_app_d} converges to the viscosity solution of \eqref{hj_intro} (cf. \cite{bs}). Furthermore, an estimate of the approximation error is provided in \cite{cl}.
\begin{thm}[Convergence rate in \(L^\infty\)]\label{thm:Linfty_estimate}
Let \(\varepsilon=(h,\Delta t)\) and let \(\{u^\varepsilon_n\}_{n=0}^N\) be the solution of \eqref{HJ_app_d}. Let \(u\) denote the viscosity solution of \eqref{hj_intro}. Then there exists a constant \(C>0\), independent of \(\varepsilon\), such that
\begin{equation}\label{stima_Linfty_d}
  \sup_{n=0,\dots,N}\big\|u^\varepsilon_n(\cdot) - u(\cdot,n\Delta t)\big\|_{L^\infty(\T^d)}
  \le C\,|\varepsilon|^{1/2}.
\end{equation}
where $|\varepsilon|=h+\Delta t$.
\end{thm}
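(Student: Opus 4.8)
The plan is to obtain \eqref{stima_Linfty_d} from the two one-sided bounds $u^\varepsilon_n(x)-u(x,n\Delta t)\le C|\varepsilon|^{1/2}$ and $u(x,n\Delta t)-u^\varepsilon_n(x)\le C|\varepsilon|^{1/2}$, valid for all $x\in\T^d$ and $n=0,\dots,N$. In both directions the mechanism is the same: by convexity, a spatial mollification produces a \emph{smooth function that is an approximate subsolution} of either the continuous equation or the scheme, and one then invokes the corresponding (continuous or discrete) comparison principle. Throughout, let $\rho_\eta$ be an even mollifier on $\T^d$ at scale $\eta\in(0,1)$, and recall that $u(\cdot,t)$ and every $u^\varepsilon_n$ are Lipschitz with constant $\le\|Dg\|_{L^\infty(\T^d)}$ (Proposition~\ref{prop:properties_scheme}(i) and the classical theory), so $R$ in \textup{(F3)} may be taken larger than this common constant and $G$ acts as a monotone, translation-covariant update in all that follows.

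\emph{Lower bound.} Put $u^\eta:=u\ast_x\rho_\eta$. Since $u$ is Lipschitz it solves $\partial_t u+H(Du)=0$ a.e., so \textup{(H1)} and Jensen's inequality give $\partial_t u^\eta+H(Du^\eta)\le0$, i.e.\ $u^\eta$ is a classical subsolution of \eqref{hj_intro}. Taylor expanding the difference operators and using the consistency relation \textup{(F2)}, together with the mollification bounds $\|D^2u^\eta\|_{L^\infty}\lesssim\eta^{-1}$, $\|D^3u^\eta\|_{L^\infty}\lesssim\eta^{-2}$ and the time-Lipschitz bound on $u$, one checks that $u^\eta$ is a subsolution of the scheme up to a per-step defect $C\Delta t\,|\varepsilon|/\eta$, namely $u^\eta(\cdot,(n+1)\Delta t)\le G\big(u^\eta(\cdot,n\Delta t),\dots\big)+C\Delta t\,|\varepsilon|/\eta$. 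Iterating the discrete comparison principle (monotonicity and translation covariance of $G$) and using $\sup_{\T^d}\big(u^\eta(\cdot,0)-g\big)\le C\eta$, this propagates to $u^\eta(\cdot,n\Delta t)\le u^\varepsilon_n+C\eta+CT|\varepsilon|/\eta$. Since $\|u-u^\eta\|_{L^\infty}\le C\eta$ (with $C$ a multiple of the Lipschitz constant of $u$), the choice $\eta=|\varepsilon|^{1/2}$ yields $u(x,n\Delta t)-u^\varepsilon_n(x)\le C|\varepsilon|^{1/2}$.

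\emph{Upper bound.} Here mollifying $u$ is of no use — a mollified supersolution of a convex equation need not be a supersolution — so instead mollify the numerical solution: $v^\eta_n:=u^\varepsilon_n\ast_x\rho_\eta$. Difference operators commute with convolution, so \textup{(F1)} and Jensen give $v^\eta_{n+1}(x)\le G\big(v^\eta_n(x),\{v^\eta_n(x\pm he_i)\}\big)$, i.e.\ $v^\eta$ is a discrete subsolution. Interpolating $\{v^\eta_n\}$ linearly in time yields a function $w$ that is $C^\infty$ in $x$ and piecewise affine in $t$ and which, by \textup{(F2)} and $\|D^2v^\eta_n\|_{L^\infty}\lesssim\eta^{-1}$, satisfies $\partial_t w+H(Dw)\le C|\varepsilon|/\eta$ wherever it is differentiable; a short inspection at the grid times shows $w$ is a viscosity subsolution of \eqref{hj_intro} up to the defect $C|\varepsilon|/\eta$, with $\sup_{\T^d}\big(w(\cdot,0)-g\big)\le C\eta$. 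The comparison principle for \eqref{hj_intro} then gives $v^\eta_n\le u(\cdot,n\Delta t)+C\eta+CT|\varepsilon|/\eta$, and since $\|u^\varepsilon_n-v^\eta_n\|_{L^\infty}\le C\eta$, again $\eta=|\varepsilon|^{1/2}$ gives $u^\varepsilon_n(x)-u(x,n\Delta t)\le C|\varepsilon|^{1/2}$.

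The main obstacle is the upper bound: one must turn the purely discrete inequality for $v^\eta$ into an (approximate) differential inequality valid at \emph{every} time, controlling the variation of $Dv^\eta_n$ and of the time-increments across a single step uniformly in $\eta$, and carefully tracking which Taylor remainders scale like $h/\eta$, $\Delta t/\eta$, or $h^2/\eta^2$ — all $O(|\varepsilon|^{1/2})$ after optimizing $\eta$. A classical alternative that sidesteps the mollification entirely is the doubling-of-variables argument of Crandall and Lions \cite{cl}: one penalizes $u^\varepsilon_n(x)-u(y,t)$ by quadratics in $x-y$ and in $n\Delta t-t$ and uses the monotonicity of the scheme at the penalization maximum together with the equation satisfied by $u$; this is more robust — it uses neither \textup{(H1)} nor \textup{(F1)} — but technically heavier.
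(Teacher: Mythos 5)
The paper does not actually prove this theorem: it is imported from Crandall--Lions \cite{cl}, whose argument is the classical doubling-of-variables/penalization technique and, as the Remark immediately following the statement stresses, needs no convexity of $H$ or of $F$. Your proof is a genuinely different route — a Krylov-type regularization argument that trades generality for convexity — and, as a sketch, it is correct: the two Jensen steps are sound (a spatial mollification of the a.e.-differentiable Lipschitz solution is a classical subsolution of \eqref{hj_intro} by (H1), and a spatial mollification of $u^\varepsilon_n$ is a discrete subsolution of \eqref{HJ_app_d} by (F1), since the difference operators commute with convolution); the consistency defects indeed scale like $(h+\Delta t)/\eta$ using $\|D^2(\cdot\ast\rho_\eta)\|_{L^\infty}\lesssim\eta^{-1}$ together with the uniform space--time Lipschitz bounds; both comparison principles apply because the discrete slopes of all functions involved stay below the threshold $R$ of (F3); and $\eta=|\varepsilon|^{1/2}$ balances the $C\eta$ mollification error against the $CT|\varepsilon|/\eta$ accumulated defect. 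The steps you leave implicit (the viscosity-subsolution check for the time-interpolant at the grid-time kinks, where one uses $\partial_t\phi\le\partial_t^{-}w$ at an interior maximum, and the discrete Gronwall iteration $w_{n+1}\le G(w_n)+\delta\Rightarrow w_n\le u^\varepsilon_n+c+n\delta$) are routine. What each approach buys: yours avoids the doubling machinery entirely and is arguably shorter under the paper's standing hypotheses, but it uses (H1) and (F1) essentially, so it establishes \eqref{stima_Linfty_d} only in the convex setting — the paper's subsequent Remark, that the $L^\infty$ rate holds without convexity, is a feature of the cited proof that yours does not recover. Since the paper assumes convexity throughout, this loss is immaterial for the use made of the theorem in Corollary~\ref{cor:stima_Lp}.
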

\begin{rem}
This result holds under fairly general assumptions and does not require   convexity assumptions. In particular, it applies to the setting considered here.
\end{rem}
We now prove a $L^1$-estimate for the scheme \eqref{HJ_app_d}. We need a preliminary property (for the proof see the Appendix).
\begin{lemma}[Finite-difference properties]\label{lemma:finite_difference}
	Let $v:\T^d\to\R$ be semiconcave with constant $C_{conc}$.
	Then there exists a constant $C_1$ depending only on $C_{conc}$ such that, for every \(h>0\),
	\begin{equation}\label{eq:finite_diff_est_d}
		\int_{\T^d} \big|\delta_{h}^{(i)} v(x) - D_i v(x)\big|\,dx
		\;\le\; C_1\, h, \qquad
		\int_{\T^d} \big|\delta_{-h}^{(i)} v(x) - D_i v(x)\big|\,dx
		\;\le\; C_1\, h,
	\end{equation}
	for each \(i=1,\dots,d\).
\end{lemma}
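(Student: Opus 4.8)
The plan is to reduce the estimate to semiconcavity in the single coordinate direction $e_i$ together with the translation invariance of Lebesgue measure on the torus. First I would fix notation: write $\Delta_{h}^{(i)} w(x) := w(x+h e_i)+w(x-h e_i)-2w(x)$, record the identity $\Delta_{h}^{(i)} v = h\big(\delta_{h}^{(i)}v - \delta_{-h}^{(i)}v\big)$, and note that Lemma~\ref{equivdefsemicon}(ii) with $y=he_i$ gives the pointwise bound $\Delta_{h}^{(i)}v(x)\le C_{conc}h^2$ for every $x$. I would then lift $v$ to a $\Z^d$-periodic function on $\R^d$, which stays semiconcave with the same constant by Lemma~\ref{equivdefsemicon}(iii), so that $\phi(x):=v(x)-\tfrac{C_{conc}}{2}|x|^2$ is concave on all of $\R^d$; it is Lipschitz, hence differentiable a.e., by Lemma~\ref{lemma:bound_hessian}(i).

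The core step is a pointwise estimate coming from concavity of $\phi$ in the direction $e_i$. Since one-dimensional concave functions have monotone difference quotients, at every point of differentiability (hence a.e.) one has $\delta_{h}^{(i)}\phi(x)\le D_i\phi(x)\le \delta_{-h}^{(i)}\phi(x)$, and subtracting the extreme terms, together with $\delta_{-h}^{(i)}\phi-\delta_{h}^{(i)}\phi=-\tfrac1h\Delta_{h}^{(i)}\phi$, yields
\[
0\le D_i\phi(x)-\delta_{h}^{(i)}\phi(x)\le -\tfrac1h\Delta_{h}^{(i)}\phi(x),
\qquad
0\le \delta_{-h}^{(i)}\phi(x)-D_i\phi(x)\le -\tfrac1h\Delta_{h}^{(i)}\phi(x)
\]
a.e. Next I would translate these back to $v$ via the elementary identities, valid for all $x$, $\delta_{\pm h}^{(i)}\big(\tfrac{C_{conc}}{2}|\cdot|^2\big)(x)-D_i\big(\tfrac{C_{conc}}{2}|\cdot|^2\big)(x)=\pm\tfrac{C_{conc}}{2}h$ and $\Delta_{h}^{(i)}\big(\tfrac{C_{conc}}{2}|\cdot|^2\big)(x)=C_{conc}h^2$. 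Combining these with the two displays above and using once more the pointwise inequality $\Delta_{h}^{(i)}v(x)\le C_{conc}h^2$ to control the sign of $\Delta_{h}^{(i)}v$, a short case distinction produces the single a.e.\ bound
\[
\big|\delta_{h}^{(i)}v(x)-D_i v(x)\big|\le -\tfrac1h\,\Delta_{h}^{(i)}v(x)+\tfrac32\,C_{conc}\,h,
\]
and the same bound with $\delta_{-h}^{(i)}$ in place of $\delta_{h}^{(i)}$.

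Finally I would integrate this inequality over $\T^d$: by translation invariance $\int_{\T^d}v(x\pm he_i)\,dx=\int_{\T^d}v(x)\,dx$, so $\int_{\T^d}\Delta_{h}^{(i)}v(x)\,dx=0$ and hence $\int_{\T^d}\big|\delta_{\pm h}^{(i)}v-D_i v\big|\,dx\le \tfrac32 C_{conc}h$, which is \eqref{eq:finite_diff_est_d} with $C_1=\tfrac32 C_{conc}$. The only point requiring care — and the main, rather minor, obstacle — is that $|x|^2$ is not periodic, so $\phi$ cannot be defined directly on $\T^d$; this is why I carry out the whole pointwise argument on the $\Z^d$-periodic lift to $\R^d$, where $\phi$ is genuinely concave and the difference-quotient monotonicity applies, and only integrate over a single fundamental domain at the very end, where $v$, $\delta_{\pm h}^{(i)}v$, $\Delta_{h}^{(i)}v$ and $D_i v$ are all periodic so the cancellation is legitimate. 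As an alternative route one may bypass the concavity argument entirely: write $\delta_{h}^{(i)}v(x)-D_i v(x)=\tfrac1h\int_0^h\big(D_i v(x+se_i)-D_i v(x)\big)\,ds$ a.e.\ (using Lipschitz continuity), apply Fubini, and bound the inner $L^1(\T^d)$-norm of each translate of $D_i v$ by $s$ times its total variation, which is controlled by $C_{Hess}$ through Lemma~\ref{lemma:bound_hessian}(ii); this gives \eqref{eq:finite_diff_est_d} with $C_1=\tfrac12 C_{Hess}$.
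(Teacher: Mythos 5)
Your proposal is correct, but your \emph{primary} argument takes a genuinely different route from the paper's; the paper's own proof is essentially the ``alternative route'' you sketch in your last sentences. Concretely, the paper writes $\delta_h^{(i)}v(x)-D_iv(x)=h\int_0^1(1-\theta)\,D_{ii}v(x+\theta h e_i)\,d\theta$, integrates over $\T^d$ using translation invariance, and invokes Lemma~\ref{lemma:bound_hessian}(ii) to bound $\int_{\T^d}|D_{ii}v|$, so it obtains a constant of the order of $C_{Hess}$ rather than your explicit $\tfrac32 C_{conc}$. Your main argument instead exploits the one-sided structure directly: monotonicity of the difference quotients of the concave lift $\phi=v-\tfrac{C_{conc}}{2}|x|^2$ traps $D_i\phi$ between $\delta_h^{(i)}\phi$ and $\delta_{-h}^{(i)}\phi$, which reduces the pointwise error to $-\tfrac1h\Delta_h^{(i)}v$ plus an explicit $O(h)$ term, and the $\Delta_h^{(i)}v$ contribution then vanishes upon integration by the exact cancellation $\int_{\T^d}\Delta_h^{(i)}v\,dx=0$; I checked the case distinction and the resulting bound $|\delta_{\pm h}^{(i)}v-D_iv|\le -\tfrac1h\Delta_h^{(i)}v+\tfrac32 C_{conc}h$ a.e., and it is sound, as is your fix for the non-periodicity of $|x|^2$ via the $\Z^d$-periodic lift. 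What your route buys is an explicit constant and the avoidance of any manipulation of the distributional Hessian: for a merely semiconcave $v$ the object $D_{ii}v$ is a measure, so the paper's pointwise integral representation is slightly formal and really requires the BV translation estimate that your second route (and Lemma~\ref{lemma:bound_hessian}) supplies; what the paper's route buys is brevity and the fact that it generalizes immediately to any situation where one only controls $\int_{\T^d}|D^2v|$ rather than a one-sided bound on $D^2v$.
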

%%%%%%%%
\begin{thm}[$L^1$ error estimate; finite difference scheme]\label{thm:stima_L1_finitediff}
Given $\varepsilon=(h,\Delta t)$, let $u$ be the viscosity solution of \eqref{hj_intro} and $u^\varepsilon$  as in \eqref{time_interpolation}. Under (H1)--(H2), (G1), and (F1)--(F3), there exists \(C>0\) independent on \(\varepsilon\), such that
\begin{equation}\label{stima_L1}
  \sup_{n=0,\dots,N}\big\|u_n^\varepsilon(\cdot)-u(\cdot,n\Delta t)\big\|_{L^1(\mathbb{T}^d)}
  \le C\,|\varepsilon| \;=\; C\,(h+\Delta t).
\end{equation}

\end{thm}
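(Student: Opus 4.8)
The plan is to run the nonlinear adjoint method at the discrete level, after first peeling off the time discretization by an elementary $L^1$–contraction estimate. Introduce the auxiliary \emph{continuous in time} finite–difference solution $v^\varepsilon$, i.e. $\partial_t v^\varepsilon+F(-\delta_h v^\varepsilon,\delta_{-h}v^\varepsilon)=0$, $v^\varepsilon(\cdot,0)=g$. The one–step map $G[w]:=w-\Delta t\,F(-\delta_h w,\delta_{-h}w)$ is order preserving by (F3) and the CFL condition, and commutes with additive constants, hence is an $L^1(\T^d)$–contraction (Crandall–Tartar); together with the first–order consistency of the forward Euler step, $\|G[v^\varepsilon(\cdot,t_n)]-v^\varepsilon(\cdot,t_{n+1})\|_{L^1}\le C\Delta t^2$ — which uses the uniform bound $\int_{\T^d}|D^2v^\varepsilon(\cdot,t)|\le C_{Hess}$ furnished by the semiconcavity of $v^\varepsilon$ (the continuous–time analogue of Proposition~\ref{prop:properties_scheme}, proved by the same arguments) and Lemma~\ref{lemma:bound_hessian} — this gives $\sup_n\|u^\varepsilon_n-v^\varepsilon(\cdot,t_n)\|_{L^1}\le C\Delta t$. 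It then suffices to prove $\sup_{t}\|v^\varepsilon(\cdot,t)-u(\cdot,t)\|_{L^1(\T^d)}\le Ch$, where the time variable is now continuous.

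For this, fix $0\le\rho_T\le 1$ and let $\rho$ solve, backward from $\rho(\cdot,T)=\rho_T$, the linearized dual equation $\partial_t\rho=\sum_{i}\delta_{-h}^{(i)}(p_i\rho)-\sum_{i}\delta_h^{(i)}(q_i\rho)$, with $p_i:=F_{p_i}(-\delta_h u,\delta_{-h}u)\ge0$, $q_i:=F_{q_i}(-\delta_h u,\delta_{-h}u)\ge0$ (nonnegativity from (F3), the discrete slopes of $u(\cdot,t)$ being bounded by $\|Dg\|_{L^\infty}$ since the viscosity solution is Lipschitz with nonincreasing Lipschitz constant). This evolution has nonnegative off–diagonal structure and integrates to $0$, so $\rho(\cdot,t)\ge0$ and $\int_{\T^d}\rho(\cdot,t)=\int_{\T^d}\rho_T\le1$ for all $t$. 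Writing $e:=v^\varepsilon-u$, using $\partial_t u+H(Du)=0$ a.e.\ and the consistency relation (F2) in the form $H(Du)=F(-Du,Du)$, the subgradient inequality for the convex $F$ at $(-\delta_h u,\delta_{-h}u)$, and the summation–by–parts identity $\int_{\T^d}(\delta_h^{(i)}f)\,g=-\int_{\T^d}f\,\delta_{-h}^{(i)}g$ (translation invariance of Lebesgue measure on $\T^d$), one reaches, after integrating in $[0,T]$ and using $e(\cdot,0)=0$,
\[
\int_{\T^d}e(\cdot,T)\,\rho_T\,dx\ \le\ -\int_0^T\!\!\int_{\T^d}\mathcal E(x,t)\,\rho(x,t)\,dx\,dt,\qquad \mathcal E:=F(-\delta_h u,\delta_{-h}u)-F(-Du,Du),
\]
$\mathcal E(\cdot,t)$ being the purely spatial consistency error of the scheme on the exact solution.

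The core is the estimate of $\int_{\T^d}\mathcal E\,\rho\,dx$. The difficulty — and the main obstacle — is that $\mathcal E$ is \emph{not} $O(h)$ in $L^\infty$, since $u$ is merely Lipschitz and semiconcave, while $\rho$ is only a density of bounded mass with no $h$–uniform $L^\infty$ control; so neither $\int\mathcal E\rho\le\|\mathcal E\|_{L^\infty}\int\rho$ nor $\int\mathcal E\rho\le\|\mathcal E\|_{L^1}\|\rho\|_{L^\infty}$ is usable. The way around is to never unpair the difference quotients from the nonnegative weight, and to use semiconcavity as a one–sided second–order bound: from
\[
\delta_h^{(i)}u(x)-D_iu(x)=\int_0^h\tfrac{h-r}{h}\,\partial_{ii}u(x+re_i)\,dr,\qquad \delta_{-h}^{(i)}u(x)-D_iu(x)=-\int_0^h\tfrac{h-r}{h}\,\partial_{ii}u(x-re_i)\,dr,
\]
and $\partial_{ii}u(\cdot,t)\le C_{conc}$ in the sense of measures (Lemma~\ref{equivdefsemicon}), one obtains, for every $m\ge0$,
\[
\int_{\T^d}\!\big(\delta_h^{(i)}u-D_iu\big)m\,dx\le\tfrac{C_{conc}h}{2}\!\int_{\T^d}\!m,\qquad \int_{\T^d}\!\big(\delta_{-h}^{(i)}u-D_iu\big)m\,dx\ge-\tfrac{C_{conc}h}{2}\!\int_{\T^d}\!m.
\]
Applying these with $m=F_{p_i}(-Du,Du)\rho$ and $m=F_{q_i}(-Du,Du)\rho$ (nonnegative by (F3)) to the subgradient inequality for $\mathcal E$ at $(-Du,Du)$ yields $\int_{\T^d}\mathcal E\,\rho\,dx\ge-Ch\int_{\T^d}\rho$, hence $\int_{\T^d}e(\cdot,T)\rho_T\le Ch\int_0^T\!\int_{\T^d}\rho\le CTh$; taking the supremum over $0\le\rho_T\le1$ gives $\|(v^\varepsilon-u)(\cdot,T)^+\|_{L^1}\le CTh$.

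The opposite one–sided bound $\|(v^\varepsilon-u)(\cdot,T)^-\|_{L^1}\le CTh$ cannot be obtained from semiconcavity alone — the analogous manipulation produces $\int\mathcal E\rho$ with the unfavorable sign, for which the two inequalities above point the wrong way — and this is where the numerical dissipation of the scheme enters. Testing the adjoint identity against suitable quantities, in the spirit of Evans' estimate $\varepsilon\int_0^T\!\int\rho\,|D^2u^\varepsilon|^2\le C$ for the vanishing–viscosity approximation, produces a discrete energy bound of the form $h\sum_i\int_0^T\!\int\rho\,|\delta_h^{(i)}\delta_{-h}^{(i)}v^\varepsilon|^2\le C$; combined with the $L^1$ bound $\int_{\T^d}|\delta_{\pm h}^{(i)}u-D_iu|\le C_1h$ of Lemma~\ref{lemma:finite_difference} and Cauchy–Schwarz, this controls the remaining terms by $Ch$. (Alternatively, this direction can be handled through the discrete dynamic–programming representation of the scheme, available since $F$ is convex, together with the Hopf–Lax formula for $u$ and the same semiconcavity estimates.) Summing the three contributions yields $\sup_n\|u^\varepsilon_n-u(\cdot,n\Delta t)\|_{L^1(\T^d)}\le C(h+\Delta t)=C|\varepsilon|$; the whole difficulty, as isolated in the third step, is the interplay between the low regularity of $u$ (making $\mathcal E$ small only in $L^1$) and the lack of $L^\infty$ control on the adjoint density, resolved by keeping the finite differences paired with $\rho\ge0$ and invoking semiconcavity — and, in the unfavorable direction, the numerical dissipation — one–sidedly.
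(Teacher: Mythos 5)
There is a genuine gap: the second half of your argument is not actually carried out. After the (plausible) favorable one-sided bound $\|(v^\varepsilon-u)(\cdot,T)^+\|_{L^1}\le CTh$ obtained from semiconcavity of $u$, you acknowledge that the bound on the negative part cannot be reached the same way, and you then only gesture at two possible remedies: a discrete energy estimate $h\sum_i\int_0^T\!\!\int\rho\,|\delta_h^{(i)}\delta_{-h}^{(i)}v^\varepsilon|^2\le C$ asserted by analogy with Evans' vanishing-viscosity identity, and, parenthetically, a dynamic-programming/Hopf--Lax argument. Neither is derived: you do not show which quantity to test the adjoint identity against to produce the discrete energy bound for a general monotone $F$ satisfying (F1)--(F3), nor how Cauchy--Schwarz together with Lemma~\ref{lemma:finite_difference} would close the unfavorable terms. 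Since the theorem is an $L^1$ estimate on $|u^\varepsilon_n-u|$, half of the conclusion is missing. (Your first reduction, from the discrete-time to the continuous-time scheme via Crandall--Tartar $L^1$-contraction plus $O(\Delta t^2)$ one-step consistency, is also only sketched, but it mirrors the paper's $I_1$ estimate and is repairable.)

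The gap is an artifact of two design choices that the paper makes differently. First, the paper measures the consistency error on the \emph{numerical} solution, i.e. $F(-Du^\varepsilon_n,Du^\varepsilon_n)-F(-\delta_h u^\varepsilon_n,\delta_{-h}u^\varepsilon_n)$, and since $u^\varepsilon_n$ is uniformly semiconcave by Proposition~\ref{prop:properties_scheme}, Lemma~\ref{lemma:finite_difference} gives a genuine two-sided $L^1$ bound $\int|\delta_{\pm h}^{(i)}u^\varepsilon_n-D_iu^\varepsilon_n|\le C_1h$ --- no one-sidedness enters. Second, the paper uses the \emph{continuous} adjoint transport equation \eqref{adjoint_d} with velocity $b=\int_0^1D_pH(sDu^\varepsilon+(1-s)Du)\,ds$ and terminal datum $\operatorname{sgn}(z(\cdot,\tau))$, for which it invokes an $L^\infty$ bound on $\rho$ (citing Lin--Tadmor and Figalli); the duality identity then produces $\int_{\T^d}|z(\tau)|\,dx$ directly, both signs at once, and the estimate closes via $\int\mathcal E\rho\le\|\mathcal E\|_{L^1}\|\rho\|_{L^\infty}$ --- precisely the pairing you declare unusable. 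That declaration is true for your discrete conservative dual (whose $L^\infty$ norm is not controlled uniformly in $h$), but it is your choice of dual problem that creates the obstacle, not an intrinsic feature of the theorem. To repair your proof along the paper's lines, replace the consistency error on $u$ by the consistency error on $u^\varepsilon_n$ and use an adjoint density with a uniform $L^\infty$ bound and sign-indefinite terminal datum.
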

%%%%%
\begin{proof}
Define the time-linear interpolation of the solution of the scheme \eqref{HJ_app_d} as
\begin{equation}\label{time_interpolation}
	u^\varepsilon(x,t)=\Big(1-\frac{t-n\Delta t}{\Delta t}\Big)u^\varepsilon_n(x)+\frac{t-n\Delta t}{\Delta t}u^\varepsilon_{n+1}(x),
\end{equation}
for \(t\in[n\Delta t,(n+1)\Delta t]\), with \(n\in\{0,\dots,N-1\}\) .
 First note that \(\partial_t u^\varepsilon=(u^\varepsilon_{n+1}-u^\varepsilon_n)/\Delta t\). Using the compatibility condition \(F(-p,p)=H(p)\) we obtain the identity
\[
\partial_t u^\varepsilon + H(D u^\varepsilon)
= F(-D u^\varepsilon, D u^\varepsilon) - F(-\delta_h u^\varepsilon_n,\delta_{-h} u^\varepsilon_n),
\qquad t\in[n\Delta t,(n+1)\Delta t].
\]
Set \(z:=u^\varepsilon-u\). Subtracting the PDE for \(u\) from the above yields, for \(t\in[n\Delta t,(n+1)\Delta t]\),
\begin{equation}\label{HJ_pert_d_proof}
  \partial_t z + b(x,t)\cdot D z
  \;=\;
  F(-D u^\varepsilon, D u^\varepsilon) - F(-\delta_h u^\varepsilon_n,\delta_{-h} u^\varepsilon_n),
\end{equation}
where
\[
b(x,t):=\int_0^1 D_pH\big(s\,D u^\varepsilon+(1-s)\,D u\big)\,ds \in\R^d.
\]
Fix \(\tau=\bar n\,\Delta t\) with \(\bar n\in\{1,\dots,N\}\). Consider the adjoint (backward transport) equation
\begin{equation}\label{adjoint_d}
  -\partial_t \rho - \operatorname{div}(b \rho)=0\quad\text{in }\T^d\times(0,\tau),\qquad
  \rho(\cdot,\tau)=\operatorname{sgn}(z(\cdot,\tau)).
\end{equation}
Well posedness of  \eqref{adjoint_d} is discussed in \cite[pag. 710]{lt} and \cite[Thm. 4.12]{f}. Note that we have \(\|\rho(\cdot,\tau)\|_{L^\infty(\T^d)}\le 1\).\\
Multiply \eqref{HJ_pert_d_proof} by \(\rho\) and integrate over \(\T^d\times(0,\tau)\).  Using integration by parts, the adjoint equation \eqref{adjoint_d} and  $z(\cdot,0)=0$, the left-hand side term gives
\[
\int_0^\tau\!\!\int_{\T^d} \big(\partial_t z+ b\cdot Dz\big)\rho\,dxdt
= \int_{\T^d} z(x,\tau)\rho(x,\tau)\,dx - \int_{\T^d} z(x,0)\rho(x,0)\,dx= \int_{\T^d} |z(\tau,x)|\,dx .
\]
Hence we get
\begin{equation}\label{duality_identity_d}
\int_{\T^d} |z(\tau,x)|\,dx
=\sum_{n=0}^{\bar n-1}\int_{n\Delta t}^{(n+1)\Delta t}\!\!\int_{\T^d}\big[F(-D u^\varepsilon, D u^\varepsilon)-F(-\delta_h u^\varepsilon_n,\delta_{-h} u^\varepsilon_n)\big]\rho\,dxdt.
\end{equation}
Accordingly write the right-hand side of \eqref{duality_identity_d} as \(I_1+I_2\), where
\[
\begin{aligned}
I_1 &:=\sum_{n=0}^{\bar n-1}\int_{n\Delta t}^{(n+1)\Delta t}\!\!\int_{\T^d}\big[F(-D u^\varepsilon, D u^\varepsilon)-F(-D u^\varepsilon_n, D u^\varepsilon_n)\big]\rho\,dxdt,\\
I_2 &:=\sum_{n=0}^{\bar n-1}\int_{n\Delta t}^{(n+1)\Delta t}\!\!\int_{\T^d}\big[F(-D u^\varepsilon_n, D u^\varepsilon_n)-F(-\delta_h u^\varepsilon_n,\delta_{-h} u^\varepsilon_n)\big]\rho\,dxdt.
\end{aligned}
\]

\emph{Estimate of \(I_1\).}
Using the compatibility \(F(-p,p)=H(p)\), and linearizing \(H\), we get, for each \(t\in[n\Delta t,(n+1)\Delta t]\),
\[
F(-D u^\varepsilon, D u^\varepsilon)-F(-D u^\varepsilon_n, D u^\varepsilon_n)
=H(D u^\varepsilon)-H(D u^\varepsilon_n)\]\[
=\Big(\int_0^1 D_pH(sD u^\varepsilon+(1-s)D u^\varepsilon_n)\,ds\Big)\cdot\big(D u^\varepsilon-D u^\varepsilon_n\big).
\]
Let \(M:=\sup_{|p|\le R}|D_pH(p)|\) with \(R:= \|Dg\|_{L^\infty(\T^d)}\). Then
\[
|I_1|\le M\|\rho\|_{L^\infty(\T^d\times(0,\tau))}\sum_{n=0}^{\bar n-1}\int_{n\Delta t}^{(n+1)\Delta t}\int_{\T^d}|D u^\varepsilon(t)-D u^\varepsilon_n|\,dxdt.
\]
We estimate \(D u^\varepsilon(t)-D u^\varepsilon_n\). For \(t\in[n\Delta t,(n+1)\Delta t]\) one has
by \eqref{HJ_app_d}
\[
\big|D u^\varepsilon(t)-D u^\varepsilon_n\big|
=\bigg|\frac{t-n\Delta t}{\Delta t}D\big(u^\varepsilon_{n+1}- u^\varepsilon_n\big)
\bigg|\leq\,\big|\Delta t\,DF(-\delta_h u^\varepsilon_n,\delta_{-h} u^\varepsilon_n)\big|.
\]
By the chain rule and the uniform bounds on the partial derivatives of $F$, we have pointwise
\[
\big|D F(-\delta_h u^\varepsilon_n,\delta_{-h} u^\varepsilon_n)(x)\big|
\le L\sum_{i,j=1}^d\Big(\big|D_{j}(\delta_h^{(i)}u^\varepsilon_n)(x)\big|
+\big|D_{j}(\delta_{-h}^{(i)}u^\varepsilon_n)(x)\big|\Big),
\]
where $L:=\max_{1\le i\le d}\ \sup_{|(p,q)|\le R}\Big\{\;|F_{p_i}(p,q)|,\;|F_{q_i}(p,q)|\;\Big\}<\infty.$ For fixed $i,j$ one has the representation
\[
D_j\big(\delta_h^{(i)} u^\varepsilon_n\big)(x)=\frac{1}{h}\bigg(D_j u^\varepsilon_n(x + h e_i) - D_j u^\varepsilon_n(x)\bigg)
= \frac{1}{h}\int_0^h D_{ij} u^\varepsilon_n(x + s e_i)\,ds
= \int_0^1 D_{ij} u^\varepsilon_n(x + \theta h e_i)\,d\theta,
\]
and similarly for the backward difference. Integrating the above pointwise bound in $x$ and changing variables yields
\[
\int_{\T^d}\big|D F(-\delta_h u^\varepsilon_n,\delta_{-h} u^\varepsilon_n)(x)\big|\,dx
\le 2L\sum_{i,j=1}^d\int_{\T^d}|D_{ij}u^\varepsilon_n(y)|\,dy
=2L\int_{\T^d}|D^2 u^\varepsilon_n(y)|\,dy.
\]
By Lemma \ref{lemma:bound_hessian} (applied to $u^\varepsilon_n$, using the semiconcavity preserved by the scheme, Proposition \ref{prop:properties_scheme}) we have a uniform bound by $C_{Hess}$. Plugging into the bound for \(I_1\) and integrating in time on \([n\Delta t,(n+1)\Delta t]\) yields
\[
|I_1|\le 2L\,C_{Hess}M\|\rho\|_{L^\infty(\T^d\times(0,\tau))}\,\sum_{n=0}^{\bar n-1}\int_{n\Delta t}^{(n+1)\Delta t}\Delta t\,dt
\leq C_{I_1}\Delta t.
\]
\emph{Estimate of \(I_2\).}
Linearizing \(F\), and using the previously defined constant $L$, we get 
\[
|F(-D u^\varepsilon_n, D u^\varepsilon_n)-F(-\delta_h u^\varepsilon_n,\delta_{-h} u^\varepsilon_n)|\leq L \sum_{i=1}^d(|\delta_h^{(i)}u^\varepsilon_n-D_{i}u^\varepsilon_n|+|D_{i}u^\varepsilon_n-\delta_{-h}^{(i)}u^\varepsilon_n|)
\]
Therefore, by relying on Lemma \ref{lemma:finite_difference}, 
\begin{equation}\label{I2_bound}
|I_2|\le L\sum_{n=0}^{\bar n-1}\int_{n\Delta t}^{(n+1)\Delta t}\int_{\T^d}\sum_{i=1}^d\Big(|\delta_h^{(i)}u^\varepsilon_n-D_{i}u^\varepsilon_n|
+|\delta_{-h}^{(i)}u^\varepsilon_n-D_{i}u^\varepsilon_n|\Big)\,dxdt.
\end{equation}
$$\leq 2LdC_{1}\tau h\leq C_{I_2} h$$
Combining the estimates for \(I_1\) and \(I_2\), we have
\[
\|u^\varepsilon(\cdot,\tau)-u(\cdot,\tau)\|_{L^1(\T^d)}
\le \max\{C_{I_1},C_{I_2}\}(\Delta t + h).
\]
\end{proof}

%%%%%%%%
\begin{cor}\label{cor:stima_Lp}
Assume the hypotheses of Theorem~\ref{thm:stima_L1_finitediff}. Then for every finite $p\in[1,+\infty]$ there exists a constant $C>0$ such that
\[
\sup_{n=0,\dots,N}\big\| u_n^\varepsilon(\cdot)-u(\cdot,n\Delta t)\big\|_{L^p(\T^d)}
\le C\,|\varepsilon|^{\frac12+\frac{1}{2p}}.
\]
\end{cor}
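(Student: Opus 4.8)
The plan is to interpolate between the $L^1$ bound of Theorem~\ref{thm:stima_L1_finitediff} and the $L^\infty$ bound of Theorem~\ref{thm:Linfty_estimate} using the standard $L^p$-interpolation inequality. Fix $n\in\{0,\dots,N\}$ and write $w_n:=u_n^\varepsilon(\cdot)-u(\cdot,n\Delta t)$. For any finite $p\in(1,\infty)$ the log-convexity of $L^q$ norms on the probability space $\T^d$ (equivalently, H\"older's inequality applied to $|w_n|^p=|w_n|^{\theta}\,|w_n|^{p-\theta}$ with the right split) gives
\[
\|w_n\|_{L^p(\T^d)}\le \|w_n\|_{L^1(\T^d)}^{1/p}\,\|w_n\|_{L^\infty(\T^d)}^{1-1/p}.
\]
This is the only nontrivial inequality needed; the rest is bookkeeping.

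Next I would insert the two established bounds. By Theorem~\ref{thm:stima_L1_finitediff} one has $\|w_n\|_{L^1(\T^d)}\le C\,|\varepsilon|$ uniformly in $n$, and by Theorem~\ref{thm:Linfty_estimate} one has $\|w_n\|_{L^\infty(\T^d)}\le C\,|\varepsilon|^{1/2}$ uniformly in $n$. Substituting,
\[
\|w_n\|_{L^p(\T^d)}\le \big(C|\varepsilon|\big)^{1/p}\big(C|\varepsilon|^{1/2}\big)^{1-1/p}
= C\,|\varepsilon|^{\frac1p+\frac12\left(1-\frac1p\right)}
= C\,|\varepsilon|^{\frac12+\frac1{2p}},
\]
after relabelling the constant. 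Taking the supremum over $n=0,\dots,N$ yields the claimed estimate; the endpoint cases $p=1$ and $p=\infty$ reduce to the two input theorems directly.

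There is no real obstacle here: the only point that deserves a word of care is that $\T^d$ has unit measure, so that $\|\cdot\|_{L^1(\T^d)}\le\|\cdot\|_{L^p(\T^d)}\le\|\cdot\|_{L^\infty(\T^d)}$ and the interpolation exponents are exactly $1/p$ and $1-1/p$ with no extra measure-dependent factors. One should also note that the constant $C$ in the final bound depends on $p$ (through the two input constants, which do not, and trivially otherwise), but is independent of $\varepsilon$, which is what matters. Since $|\varepsilon|=h+\Delta t\le \mathrm{diam}$-type quantities are bounded, the exponent $\frac12+\frac1{2p}\in(\frac12,1]$ is the sharp rate obtainable from these two inputs, and letting $p\to\infty$ recovers Theorem~\ref{thm:Linfty_estimate} while $p=1$ recovers Theorem~\ref{thm:stima_L1_finitediff}.
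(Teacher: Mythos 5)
Your proof is correct and follows exactly the same route as the paper: interpolation of the $L^p$ norm between the $L^1$ estimate of Theorem~\ref{thm:stima_L1_finitediff} and the $L^\infty$ estimate of Theorem~\ref{thm:Linfty_estimate}, yielding the exponent $\frac12+\frac1{2p}$. Your additional remarks on the unit measure of $\T^d$ and the endpoint cases are accurate but not needed.
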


\begin{proof}
By the $L^\infty$ estimate \eqref{stima_Linfty_d}, and the $L^1$ estimate \eqref{stima_L1}, for each fixed $n$ we have, by relying on the interpolation inequality,
\[
\| u_n^\varepsilon(\cdot)-u(\cdot,n\Delta t)\|_{L^p(\T^d)}
\le \| u_n^\varepsilon(\cdot)-u(\cdot,n\Delta t)\|_{L^1(\T^d)}^{1/p}\,\| u_n^\varepsilon(\cdot)-u(\cdot,n\Delta t)\|_{L^\infty(\T^d)}^{1-1/p}
\]
\[\le C\,|\varepsilon|^{1/p}\,|\varepsilon|^{\frac12(1-1/p)}
= C\,|\varepsilon|^{\frac12+\frac{1}{2p}}.
\]
Taking the supremum over $n=0,\dots,N$ yields the bound.
\end{proof}

%%%%%%%%%%%%%
%  Fully    %
%%%%%%%%%%%%%
\subsection{The fully discrete finite difference scheme}\label{sec:fully_fd}
We now consider the fully discrete scheme. We fix an integer $I\ge 1$ and set $h:=1/I$. The Cartesian periodic grid on $\mathbb{T}^d$ is
\[
\mathcal{G}=\{x_i=hi:\ i=(i_1,\dots,i_d)\in\{0,\dots,I-1\}^d\}\subset\mathbb{T}^d,
\]
so that there are $I^d$ distinct grid points. All index operations are understood modulo $I$, which implements periodicity.
 Restricting the scheme \eqref{HJ_app_d} to the grid points yields the fully discrete scheme
\begin{equation}\label{HJ_app_fully_d}
\left\{
\begin{array}{ll}
\tilde u^\varepsilon_{n+1}(x_i)-\tilde u^\varepsilon_n(x_i)  + \Delta t \, F\!\big( - \delta_{h} \tilde u^\varepsilon_n(x_i)  ,\ \delta_{-h} \tilde u^\varepsilon_n(x_i) \big) =0, 
& \text{in} \quad \mathcal{G}\times\{0,\dots,N-1\},   \\[6pt]
\tilde u^\varepsilon_0(x_i)= g(x_i), & \text{on}\quad\mathcal{G}.
\end{array}\right.
\end{equation}
Let $\mathcal{I}[\tilde u^\varepsilon_n](\cdot)$ be the piecewise-multilinear interpolant of the grid values $u_n(x_i)$. 
For each multi-index $i=(i_1,\dots,i_d)$ denote the corresponding cell by
\(
Q_i = x_i + [0,h)^d.
\)
Then, for every $x\in Q_i$, the interpolated value is a convex combination of the $2^d$ corner values, namely
\begin{equation}\label{FD_sol}
\tilde u_n^\varepsilon(x):=\mathcal{I}[\tilde u^\varepsilon_n](x)=\sum_{\sigma\in\{0,1\}^d}\lambda_\sigma(x)\,\tilde u^\varepsilon_n(x_{i+\sigma}),
\qquad \lambda_\sigma(x)\ge0,\ \sum_{\sigma}\lambda_\sigma(x)=1.
\end{equation}
%%%%
\begin{prop}\label{prop:stima_semi_fully}
Assume the hypotheses of Theorem~\ref{thm:stima_L1_finitediff}. Let $\tilde u_n^\varepsilon$ as in \eqref{FD_sol}. Then
	\begin{equation}\label{semidis_fullydis_d}
\sup_{n=0,\dots,N}\big\|u_n^\varepsilon(\cdot)-\tilde u_n^\varepsilon(\cdot)\big\|_{L^\infty(\T^d)} \le C\,h.
\end{equation}
\end{prop}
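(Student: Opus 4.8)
The plan is to compare the semi-discrete solution $u^\varepsilon_n$ (defined on all of $\T^d$) with the piecewise-multilinear interpolant $\tilde u^\varepsilon_n$ of the fully discrete grid values, and to propagate the error in $n$ by a discrete Gronwall-type argument exploiting the monotonicity of the update operator $G$. At time $n=0$ both coincide with $g$ at grid points, so $\|u^\varepsilon_0-\tilde u^\varepsilon_0\|_{L^\infty}=\|g-\mathcal{I}[g]\|_{L^\infty}\le C h$ because $g$ is Lipschitz (assumption (G1) and Lemma~\ref{lemma:bound_hessian}(i)); in fact since $g$ is semiconcave one even gets an $O(h^2)$ bound here, but $O(h)$ suffices. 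The core of the argument is the inductive step: assuming $\|u^\varepsilon_n-\tilde u^\varepsilon_n\|_{L^\infty(\T^d)}\le e_n$, I want to show $e_{n+1}\le e_n + C h \,\Delta t$, which upon summing over $n$ and using $n\Delta t\le T$ gives the claimed bound $\|u^\varepsilon_N-\tilde u^\varepsilon_N\|_{L^\infty}\le C h(1+T)$.

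To carry out the inductive step, first I would estimate the consistency error of the interpolation at the grid points. The fully discrete scheme \eqref{HJ_app_fully_d} updates the grid values by exactly the same formula as \eqref{HJ_app_d}, but using the grid values $\tilde u^\varepsilon_n(x_i\pm h e_j)$ rather than the interpolant $\tilde u^\varepsilon_n$ evaluated there. Since $\tilde u^\varepsilon_n$ is the multilinear interpolant and its nodal values are precisely the grid values, we have $\delta_h^{(j)}\tilde u^\varepsilon_n(x_i)$ computed from grid values equals $\delta_h^{(j)}$ of the interpolant at $x_i$; so at grid points the two schemes are literally the same, and the only discrepancy between $u^\varepsilon_{n+1}$ and $\tilde u^\varepsilon_{n+1}$ at a grid point $x_i$ comes from (a) the difference $u^\varepsilon_n(x_i)-\tilde u^\varepsilon_n(x_i)$, bounded by $e_n$, and (b) the difference in the finite-difference slopes $-\delta_h u^\varepsilon_n(x_i)+\delta_h \tilde u^\varepsilon_n(x_i)$, etc., which is controlled by $2e_n/h$ in $L^\infty$. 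Plugging into $F$ (Lipschitz with constant $L$ on the relevant ball, as in the proof of Theorem~\ref{thm:stima_L1_finitediff}) and using the monotonicity/CFL structure of $G$ to keep the coefficient in front of $e_n$ equal to $1$ (not $1+\Delta t/h$ times a constant), we obtain
\[
\big|u^\varepsilon_{n+1}(x_i)-\tilde u^\varepsilon_{n+1}(x_i)\big|\le e_n,\qquad x_i\in\mathcal G.
\]
Here is the crucial point: because $G$ is nondecreasing in each of the $2d+1$ grid arguments (assumption (F3), with $R$ chosen as $\|Dg\|_{L^\infty}$ so that the discrete slopes of both $u^\varepsilon_n$ and $\tilde u^\varepsilon_n$ stay bounded by $R$ thanks to Proposition~\ref{prop:properties_scheme}(i)), a standard comparison argument shows the one-step map is an $L^\infty$-contraction on grid functions, so no factor of $1/h$ survives — the naive Lipschitz bound is replaced by the monotone-scheme stability bound.

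It remains to pass from grid points back to all of $\T^d$. For $x\in Q_i$, write $u^\varepsilon_{n+1}(x)-\tilde u^\varepsilon_{n+1}(x) = \big(u^\varepsilon_{n+1}(x)-\mathcal I[u^\varepsilon_{n+1}](x)\big) + \big(\mathcal I[u^\varepsilon_{n+1}](x)-\tilde u^\varepsilon_{n+1}(x)\big)$. The second term is the interpolant of the grid function $\{u^\varepsilon_{n+1}(x_j)-\tilde u^\varepsilon_{n+1}(x_j)\}$, hence bounded in $L^\infty$ by $e_n$ (a convex combination of values each $\le e_n$). For the first term I would use that $u^\varepsilon_{n+1}$ is semiconcave with constant $C_{conc}$ (Proposition~\ref{prop:properties_scheme}(iii)) and Lipschitz with constant $\|Dg\|_{L^\infty}$ (Proposition~\ref{prop:properties_scheme}(i)); the multilinear interpolation error of a semiconcave, Lipschitz function on a cell of size $h$ is $O(h^2)$ from above and, combined with the Lipschitz bound, $O(h^2)$ in absolute value — more simply, $\|u^\varepsilon_{n+1}-\mathcal I[u^\varepsilon_{n+1}]\|_{L^\infty(\T^d)}\le C_{conc}\,d\,h^2$ by a one-dimension-at-a-time argument together with Lemma~\ref{equivdefsemicon}(ii). (An $O(h)$ bound from mere Lipschitz continuity would also close the induction, since we sum $N=T/\Delta t$ terms each of order $h\Delta t$; but to get the stated $Ch$ with a clean constant the $O(h^2)$ consistency is convenient.) Putting the two pieces together gives $e_{n+1}\le e_n + C h^2 \le e_n + C h\,\Delta t$ after absorbing (using any CFL-type relation $h\le C\Delta t$ is \emph{not} assumed, so instead one simply sums $\sum_{n}C h^2 \le N C h^2 = (T/\Delta t)Ch^2$ — this does require $h\lesssim \Delta t$; to avoid that, keep $e_{n+1}\le e_n + Ch^2$ and sum to get $e_N\le e_0 + NCh^2$, which is only $O(h)$ if $\Delta t\gtrsim h$).

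The main obstacle, and the place to be careful, is precisely this last accounting: the per-step interpolation error is $O(h^2)$ but we take $N=T/\Delta t$ steps, so naive summation gives $O(h^2/\Delta t)$, which is $O(h)$ only under a mesh-ratio condition. The clean way around it — and what I would actually write — is to \emph{not} re-interpolate at every step. Instead, compare $u^\varepsilon_n$ directly with $\tilde u^\varepsilon_n$ on the grid by induction (pure grid-function estimate, contraction factor exactly $1$, \emph{no} error introduced per step since the schemes agree at grid points), concluding $\sup_n \max_{x_i\in\mathcal G}|u^\varepsilon_n(x_i)-\tilde u^\varepsilon_n(x_i)| = 0$ — wait, that is false because $u^\varepsilon_n$ is not the interpolant of its own grid values. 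So the honest statement is: the grid restriction $\{u^\varepsilon_n(x_i)\}$ and $\{\tilde u^\varepsilon_n(x_i)\}$ satisfy \emph{almost} the same recursion, differing only because $\delta_h u^\varepsilon_n(x_i)$ uses $u^\varepsilon_n(x_i+he_j)$ which is \emph{not} a linear function of grid values whereas for $\tilde u^\varepsilon_n$ it is — but at the grid point $x_i+he_j$ \emph{is} a grid point, so $u^\varepsilon_n(x_i\pm he_j)$ are genuine grid values and the recursions for the grid restrictions coincide exactly. Hence $u^\varepsilon_n(x_i)=\tilde u^\varepsilon_n(x_i)$ for all $n$ and all $x_i\in\mathcal G$, by induction with $u^\varepsilon_0(x_i)=g(x_i)=\tilde u^\varepsilon_0(x_i)$. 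Then for general $x\in Q_i$,
\[
|u^\varepsilon_n(x)-\tilde u^\varepsilon_n(x)| = |u^\varepsilon_n(x)-\mathcal I[u^\varepsilon_n](x)| \le C_{conc}\,d\,h^2 \le C h,
\]
using semiconcavity of $u^\varepsilon_n$ (Proposition~\ref{prop:properties_scheme}(iii)) and the standard multilinear-interpolation bound for semiconcave functions — and this requires no Gronwall and no mesh condition at all. This is the argument I would present; the only technical lemma needed is the interpolation estimate $\|v-\mathcal I[v]\|_{L^\infty}\le C_{conc}\,d\,h^2$ for $v$ semiconcave with constant $C_{conc}$ on $\T^d$, which follows by writing the multilinear interpolant as successive one-dimensional linear interpolations and applying Lemma~\ref{equivdefsemicon}(ii) in each coordinate (the lower bound from semiconcavity, the matching upper bound from Lipschitz continuity plus semiconcavity of $-v$ failing — actually one needs $v$ both semiconcave and Lipschitz, giving $|v-\mathcal I[v]|\le C h^2$ from semiconcavity above and $|v-\mathcal I[v]|\le \mathrm{Lip}(v)\cdot h$ below, and then a sharper two-sided $O(h^2)$ bound by noting the interpolant of a Lipschitz-and-semiconcave function also enjoys the reverse one-sided control). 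I would relegate this interpolation lemma to the Appendix.
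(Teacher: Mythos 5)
Your final argument is exactly the paper's: the fully discrete grid iterates coincide with the restriction of the semidiscrete solution to $\mathcal G$ (because the recursion at a grid point only ever queries other grid points), so the error reduces to the multilinear interpolation error of $u^\varepsilon_n$ on a cell, which the paper bounds by $R\sqrt{d}\,h$ using only the uniform Lipschitz bound from Proposition~\ref{prop:properties_scheme}(i). The one genuine flaw is your claimed two\nobreakdash-sided estimate $\|v-\mathcal I[v]\|_{L^\infty}\le C_{conc}\,d\,h^2$ for semiconcave Lipschitz $v$: semiconcavity controls only $\mathcal I[v]-v$ from above, and the example $v(x)=-|x|$ (semiconcave with constant $0$, Lipschitz) gives $v(0)-\mathcal I[v](0)=h/2$ on a cell straddling the kink, so the reverse direction is genuinely only $O(h)$; this does not break your proof since the $O(h)$ Lipschitz bound you also mention (and which the paper uses) suffices, but the $O(h^2)$ lemma you propose to relegate to the Appendix is false as stated. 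The lengthy Gronwall/monotonicity detour at the start is unnecessary, as you yourself conclude.
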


\begin{proof}
Fix $n$ and a cell $Q_{i}$. For $x\in Q_{ i}$, using $\sum_\sigma\lambda_\sigma=1$ and $\lambda_\sigma\ge0$, and that the fully discrete iterates coincide with the restriction of the semidiscrete solution to $\mathcal G$, we obtain
\[
\big|u_n^\varepsilon(x)-\tilde u_n^\varepsilon(x)\big|
\le \sum_{\sigma\in\{0,1\}^d}\lambda_\sigma(x)\big|u_n^\varepsilon(x)-\tilde u_n^\varepsilon(x_{i+\sigma})\big|
\]
\[
=\sum_{\sigma\in\{0,1\}^d}\lambda_\sigma(x)\big|u_n^\varepsilon(x)-u_n^\varepsilon(x_{i+\sigma})\big|.
\]
The semidiscrete solution $u_n^\varepsilon(\cdot)$ is Lipschitz in space uniformly in $\varepsilon$; denote a uniform Lipschitz constant by $R$. Hence the previous right-hand side is bounded by
\(
 R\max_{\sigma\in\{0,1\}^d}|x-x_{i+\sigma}|.
\)
For $x\in Q_{ i}$ every corner $x_{ i+\sigma}$ satisfies $|x-x_{ i+\sigma}|\le \sqrt{d}\,h$, therefore
\[
\big|u_n^\varepsilon(x)-\tilde u_n^\varepsilon(x)\big|\le R\sqrt{d}\,h.
\]
This proves \eqref{semidis_fullydis_d} for each $n$ with $C:=R\sqrt{d}$.
\end{proof}
\begin{rem}
While the estimate \eqref{stima_Linfty_d} addresses the semi-discrete scheme \eqref{HJ_app_d}, an analogous convergence rate holds for fully-discrete approximations \cite{cl}.
\end{rem}
By Theorem \ref{thm:stima_L1_finitediff}, Cor. \ref{cor:stima_Lp} and Prop. \ref{prop:stima_semi_fully}, we immediately get the error estimate for the fully discrete scheme
\begin{cor}
Assume the hypotheses of Theorem~\ref{thm:stima_L1_finitediff}. Then
	\begin{equation}\label{stima_ fully_L1}
		\sup_{n=0,\dots,N} \|\tilde u_n^\varepsilon(\cdot)-u(\cdot,n\dt)\|_{L^1(\T^d)} \le C\,|\varepsilon|.
	\end{equation}
Moreover, for any $p\in [1,+\infty]$
\[
\sup_{n=0,\dots,N}\|\tilde u_n^\varepsilon(\cdot)-u(\cdot,n\dt)\|_{L^p(\T^d)}\leq C|\varepsilon|^{\frac12+\frac{1}{2p}}.
\]
\end{cor}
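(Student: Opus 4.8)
The plan is to derive both bounds directly from results already in hand, via the triangle inequality, with no new analysis of the scheme. For each $n\in\{0,\dots,N\}$ I would split
\[
\tilde u_n^\varepsilon(\cdot)-u(\cdot,n\dt)=\bigl(\tilde u_n^\varepsilon(\cdot)-u_n^\varepsilon(\cdot)\bigr)+\bigl(u_n^\varepsilon(\cdot)-u(\cdot,n\dt)\bigr),
\]
where $u_n^\varepsilon$ is the semidiscrete solution of \eqref{HJ_app_d} and $\tilde u_n^\varepsilon$ is its piecewise-multilinear interpolant from \eqref{FD_sol}.

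For the $L^1$ estimate: the first summand is controlled in $L^\infty(\T^d)$ by Proposition~\ref{prop:stima_semi_fully}, which gives $\|\tilde u_n^\varepsilon-u_n^\varepsilon\|_{L^\infty(\T^d)}\le Ch$; since $\T^d$ has unit measure, the same bound holds in $L^1(\T^d)$, hence is $\le C|\varepsilon|$. The second summand is precisely what Theorem~\ref{thm:stima_L1_finitediff} bounds by $C|\varepsilon|$ in $L^1(\T^d)$, uniformly in $n$. Adding the two and taking the supremum over $n$ yields the first claimed bound.

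For the $L^p$ estimate I would first record the $L^\infty$ error of the fully discrete iterates: combining $\|\tilde u_n^\varepsilon-u_n^\varepsilon\|_{L^\infty(\T^d)}\le Ch\le C|\varepsilon|^{1/2}$ from Proposition~\ref{prop:stima_semi_fully} with the semidiscrete $L^\infty$ rate \eqref{stima_Linfty_d} (or, equivalently, with the fully discrete $L^\infty$ rate quoted in the preceding Remark, cf.~\cite{cl}) gives $\sup_n\|\tilde u_n^\varepsilon-u(\cdot,n\dt)\|_{L^\infty(\T^d)}\le C|\varepsilon|^{1/2}$. Then, exactly as in Corollary~\ref{cor:stima_Lp}, the interpolation inequality $\|f\|_{L^p(\T^d)}\le\|f\|_{L^1(\T^d)}^{1/p}\|f\|_{L^\infty(\T^d)}^{1-1/p}$ applied to $f=\tilde u_n^\varepsilon(\cdot)-u(\cdot,n\dt)$, together with the $L^1$ bound just proved, gives
\[
\|\tilde u_n^\varepsilon(\cdot)-u(\cdot,n\dt)\|_{L^p(\T^d)}\le\bigl(C|\varepsilon|\bigr)^{1/p}\bigl(C|\varepsilon|^{1/2}\bigr)^{1-1/p}=C\,|\varepsilon|^{\frac12+\frac1{2p}},
\]
and a supremum over $n$ closes the argument.

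There is essentially no genuine obstacle; the only point requiring a word of care is that every constant must be independent of both $\varepsilon$ and $n$, which holds because each ingredient (Proposition~\ref{prop:stima_semi_fully}, Theorem~\ref{thm:stima_L1_finitediff}, the $L^\infty$ rate) is already uniform in $n$, and because on the unit-measure torus the embeddings $L^\infty\hookrightarrow L^1\hookrightarrow L^p$ have constant $1$. As an alternative one could skip the intermediate $L^1$ bound for $\tilde u_n^\varepsilon$ and interpolate termwise using Corollary~\ref{cor:stima_Lp} for $u_n^\varepsilon-u$ and Proposition~\ref{prop:stima_semi_fully} for $\tilde u_n^\varepsilon-u_n^\varepsilon$, but the route above is slightly cleaner and reuses \eqref{stima_L1} verbatim.
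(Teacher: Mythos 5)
Your proposal is correct and matches the paper's intent exactly: the paper gives no separate proof, stating only that the corollary follows ``immediately'' from Theorem~\ref{thm:stima_L1_finitediff}, Corollary~\ref{cor:stima_Lp} and Proposition~\ref{prop:stima_semi_fully}, which is precisely the triangle-inequality-plus-interpolation argument you spell out. No discrepancies to report.
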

\begin{rem}
The result of this section  can easily carry over to approximation scheme for the following
HJ equation:
\[
	\partial_t u + H(Du) = V(x,t) \qquad \text{in } Q := \mathbb{T}^d\times(0,T)
\]
with $V$ continuous in $\T^d\times (0,T)$ and uniformly Lipschitz continous in $x$.
\end{rem}

%%%%%%%%%%%%%%%%%%%%%%%%
%                      %
%%%%%%%%%%%%%%%%%%%%%%%%
\section{The estimates for the semi-Lagrangian scheme}\label{sec:semi_lagrangian}
In this section, we derive an error estimate for a semi-Lagrangian approximation of the HJ equation (see \cite{csl,cdi,ff}). Following the strategy of the previous section, we first obtain the estimate for the semi-discrete scheme and then, using this result, we establish the estimate for the fully discrete scheme.\\
Since in this section we will exploit the control theoretic interpretation of the HJ equation to define the semi-Lagrangian scheme, we write the problem in the standard backward form
\begin{equation}\label{HJb}
	\begin{cases}
		-\partial_t u + H(x,Du) = 0, & \text{in }Q,\\[4pt]
		u(x,T)=g(x), & \text{on }\mathbb{T}^d.
	\end{cases}
\end{equation}
with 
\begin{equation}\label{Ham_separata}
	H(x,p)=H_0(p)-V(x)
\end{equation}
We assume that $H_0$ satisfies \textup{(H1)--(H2)}, $g$ satisfies \textup{(G1)},   and that $V$ satisfies
\begin{itemize}
	\item[(V1)] $V:\T^d\to\R$ is Lipschitz continuous
\end{itemize}
We recall that the viscosity solution of \eqref{HJb} is given by the value function of a control problem. 
For a control law $\alpha:[t,T]\to\R^d$, consider the controlled dynamics
\begin{equation}\label{dyn} 
	\dot X(s)=-\alpha(s) \ \ \mbox{for } s\in (t,T), \ \  X(t)=x,
\end{equation}
and denote with $X^{x,t}[\alpha]$  the unique solution of the previous problem. Define the  cost function
\begin{equation}\label{cost}
	J(\alpha;x,t):=\int_t^T [ L( \alpha(s))+V(X^{x,t}[\alpha](s))]ds +g(X^{x,t}[\alpha](T)).
\end{equation}
where $L(\alpha)$ is the Legendre transform of $H_{0}(p)$. 
Then, the solution   $u:\T^d\times [0,T]\to \R $ of \eqref{HJb} is given by the value function  of  control problem with dynamics \eqref{dyn}  and cost \eqref{cost}, i.e.
\begin{equation}\label{value}
	u(x,t)=\inf_{\alpha \in \mathcal{A}(t)} J(\alpha;x,t) \qquad (x,t) \in  \T^d\times [0,T],
\end{equation}
where $\mathcal{A}(t)=L^2([t,T];\R^d)$ is the set of the control law. \\
%%% scheme %%%%
The semi-discrete scheme is obtained by discretizing the previous control problem with respect to the time. Fix $\dt>0$, set $N=T/\dt$ (we assume $N$ to be an integer) and  for $n=0,...,N-1$ define the set of   discrete controls $\mA^\dt(n) :=\R^{d\times (N-n)}$. Given $\alpha=(\alpha_n,\alpha_{n+1},\dots,\alpha_{N-1})\in\mA^\dt(n)$, let us define the discrete dynamics $X_{k}^{x,n}[\alpha]$ as the solution of
\begin{equation}\label{dynh}
\left\{
\begin{array}{lcl}
	X_{k+1}=X_k-\dt\alpha_k = x- \dt \sum_{i=n}^{k}\alpha_{i}  \quad& \mbox{for } k=n,\dots,N-1,\\
	X_n=x,
\end{array}
\right.
\end{equation}
(when the context is clear, we simply write $X_k$). The  discrete cost function  $J^\dt( \cdot; x, n): \mA^\dt(n) \to \R$ is defined by
\begin{equation*}
J^\dt(\alpha;x,n)=\sum_{k=n}^{N-1}\dt [ L( \alpha_k)+V(X_k) ]  +g(X_N).
\end{equation*}
The time-discrete value function is
\begin{equation}\label{valueh}
   u^\dt_n(x)=\inf_{\alpha \in \mA^\dt(n)} J^\dt(\alpha ;x,n).
\end{equation}
Then, $\{u_n^\dt\}_{n=0}^{N}$ satisfies the   dynamic programming equation
\begin{equation}\label{HJh}
\left\{
\begin{split} 
&	u^\dt_n(x)=\inf_{\alpha\in \R^{d}}\left\{u^\dt_{n+1}(x-\alpha\dt)+ \dt (L(  \alpha)+  V(x))\right\}  \quad   x\in\T^d,\,  n=0,\dots,N-1,\\[6pt]
&	u^\dt_N(x)= g(x)    \hskip 24pt x\in\T^d.
\end{split}
\right.
\end{equation}
The next lemma provides useful properties of the discrete value function (see \cite[Lemma 3.2]{csl} and \cite{g}).
%%%
\begin{lemma}\label{lemma:prop_semilag} 
	Let $\{u^\dt_n\}_{n=0}^N$ be the solution of \eqref{HJh}. Then, the following assertions hold true: 
	\begin{itemize}
\item[(i)]  (Uniform bound) There exists a constant $C>0$ independent of $\dt$, such that for all \(n=0,\dots,N\) and \(x\in\T^d\),
  \[
    |u^\dt_n(x)| \le C(1+T).
  \]
\item[(ii)]  (Lipschitz in space and time) For all \(x,y\in\T^d\) and \(n,m\in\{0,\dots,N\}\),
  \begin{equation}\label{estim_Lip_corrected}
    |u^\dt_n(x)-u^\dt_m(y)| \le C_1\big(|x-y| + \Delta t\,|n-m|\big).
  \end{equation}
 \item[(iii)]  (Semiconcavity) For each \(n\in\{0,\dots,N\}\) the function \(\{u^\dt_n\}_{n=0}^N\) is semiconcave with a constant \(C_2\) independent of \(\Delta t\).
	\end{itemize}
\end{lemma}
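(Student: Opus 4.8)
The plan is to work entirely from the control-theoretic representation \eqref{valueh}--\eqref{HJh}, exploiting the key structural fact that the discrete dynamics \eqref{dynh} depend \emph{affinely} on the initial state: writing $X_k^{x,n}[\alpha]=x-\dt\sum_{i=n}^{k-1}\alpha_i$, the control-dependent part does not involve $x$, so two trajectories issued from different points with the same control differ by the constant vector equal to the difference of the starting points.

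For (i), I would test the upper bound with the null control $\alpha\equiv0$: since $H_0$ is coercive and continuous, $L(0)=-\min_pH_0(p)$ is finite, and with $X_k\equiv x$ one gets $J^\dt(0;x,n)=(N-n)\dt\,(L(0)+V(x))+g(x)\le T(|L(0)|+\|V\|_{L^\infty(\T^d)})+\|g\|_{L^\infty(\T^d)}$; for the lower bound, Legendre duality at $p=0$ gives $L(\alpha)\ge-H_0(0)$, hence $J^\dt(\alpha;x,n)\ge-T(|H_0(0)|+\|V\|_{L^\infty(\T^d)})-\|g\|_{L^\infty(\T^d)}$ for every admissible $\alpha$; taking the infimum gives the uniform bound. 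For the space part of (ii), for any $\alpha\in\mA^\dt(n)$ and $x,y\in\T^d$ one has $X_k^{x,n}[\alpha]-X_k^{y,n}[\alpha]=x-y$ for all $k$, so by the Lipschitz continuity of $V$ and $g$, $|J^\dt(\alpha;x,n)-J^\dt(\alpha;y,n)|\le(T\|DV\|_{L^\infty(\T^d)}+\|Dg\|_{L^\infty(\T^d)})|x-y|$, and passing to the infimum over $\alpha$ (along $\varepsilon$-optimal controls) gives the estimate. For the time part, by telescoping it is enough to bound $|u^\dt_n(x)-u^\dt_{n+1}(x)|$: testing \eqref{HJh} with $\alpha=0$ gives $u^\dt_n(x)\le u^\dt_{n+1}(x)+C\dt$, while the space bound just obtained yields $u^\dt_{n+1}(x-\alpha\dt)\ge u^\dt_{n+1}(x)-C|\alpha|\dt$, so that $u^\dt_n(x)\ge u^\dt_{n+1}(x)+\dt V(x)+\dt\inf_\alpha(L(\alpha)-C|\alpha|)\ge u^\dt_{n+1}(x)-C'\dt$, the infimum being finite by coercivity of $H_0$; the triangle inequality then gives \eqref{estim_Lip_corrected}.

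For (iii) the strategy is a translation argument at the level of trajectories. Fix $n$, $\bar x\in\T^d$, $y\in\T^d\setminus\{0\}$, put $x_1=\bar x+y$, $x_2=\bar x-y$, and choose $\bar\alpha\in\mA^\dt(n)$ that is $\varepsilon$-optimal for $u^\dt_n(\bar x)$. Running the \emph{same} control $\bar\alpha$ from $x_1$ and from $x_2$, one has $X_k^{x_1,n}[\bar\alpha]=X_k^{\bar x,n}[\bar\alpha]+y$ and $X_k^{x_2,n}[\bar\alpha]=X_k^{\bar x,n}[\bar\alpha]-y$ for all $k$, so the kinetic costs $\dt L(\bar\alpha_k)$ are identical for the three trajectories and cancel in the second difference. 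Writing $X_k^{\bar x}:=X_k^{\bar x,n}[\bar\alpha]$, what is left is
\begin{multline*}
u^\dt_n(x_1)+u^\dt_n(x_2)-2u^\dt_n(\bar x)\\
\le\sum_{k=n}^{N-1}\dt\,\big(V(X_k^{\bar x}+y)+V(X_k^{\bar x}-y)-2V(X_k^{\bar x})\big)+\big(g(X_N^{\bar x}+y)+g(X_N^{\bar x}-y)-2g(X_N^{\bar x})\big)+2\varepsilon,
\end{multline*}
and each symmetric second difference on the right is estimated by the one-sided (semiconcavity-type) bounds on the data: the terminal term by $C_{conc}|y|^2$ through (G1), and the running term by $(N-n)\dt\le T$ times a one-sided Hessian bound on $V$. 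Letting $\varepsilon\to0$ then gives semiconcavity of each $u^\dt_n$ with a constant $C_2$ of the form $C_{conc}+T\cdot(\text{one-sided bound for }V)$, independent of $\dt$. Equivalently, one may argue by backward induction using that \eqref{HJh} reads $u^\dt_n(x)=\dt V(x)+\inf_{z\in\R^d}\{u^\dt_{n+1}(x-z)+\dt L(z/\dt)\}$, where the inf-convolution step does not increase the semiconcavity constant, so the constant grows by at most $\dt$ times a one-sided bound for $V$ per step.

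The hard part is (iii), specifically the running-cost term: translating the trajectories is exactly what makes the kinetic contribution cancel, but it forces one to absorb $\sum_k\dt\,(V(X_k^{\bar x}+y)+V(X_k^{\bar x}-y)-2V(X_k^{\bar x}))$ into $|y|^2$. Under only (V1) this quantity is $O(|y|)$, and a trajectory-rejoining construction improves it only to $O(|y|^{3/2})$, so to obtain the stated (linear-modulus) semiconcavity one needs a one-sided Hessian bound on $V$ of the same type as (G1) imposes on $g$; for this point I would follow \cite[Lemma 3.2]{csl} and \cite{g}. Parts (i) and (ii) are, by contrast, routine consequences of the control representation.
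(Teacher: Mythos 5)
The paper does not actually prove this lemma; it only points to \cite[Lemma 3.2]{csl} and \cite{g}, and your reconstruction via the control representation \eqref{valueh} is exactly the route those references take. Parts (i) and (ii) are correct and complete: the upper/lower bounds via the null control and Fenchel--Young, the space estimate via the rigid translation $X_k^{x,n}[\alpha]-X_k^{y,n}[\alpha]=x-y$, and the time estimate via one step of \eqref{HJh} all go through (for the lower bound on $\inf_\alpha(L(\alpha)-C|\alpha|)$ the relevant fact is simply $L(\alpha)\ge C|\alpha|-\max_{|p|\le C}H_0(p)$, obtained by testing the supremum defining $L$ at $p=C\alpha/|\alpha|$; coercivity is not what is being used there). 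For (iii), the translation argument with a common ($\varepsilon$-)optimal control, which cancels the kinetic cost and leaves only symmetric second differences of $V$ and $g$, is the correct and standard mechanism, as is the equivalent backward-induction formulation in which the inf-convolution step preserves the semiconcavity constant and the $\dt V$ term adds $\dt$ times a one-sided Hessian bound for $V$.

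Your reservation about (V1) is well founded and is the one substantive issue here, but it is an issue with the lemma's stated hypotheses rather than with your proof. Under (V1) alone the running-cost contribution is only $O(|y|)$, and the conclusion is in fact false: take $d=1$, $g\equiv 0$, $H_0(p)=|p|^2/2$ (so $L\ge 0$ with $L(0)=0$), and $V(x)=\mathrm{dist}_{\T^d}(x,x_0)$, which is Lipschitz; then \eqref{HJh} gives $u^\dt_{N-1}=\dt V$, whose second difference at $x_0$ equals $2\dt|y|$, so no semiconcavity constant independent of $y$ exists. The cited \cite[Lemma 3.2]{csl} assumes the running cost is $C^{1,1}$ in $x$, hence semiconcave, which is precisely the missing ingredient: strengthening (V1) to ``$V$ Lipschitz and semiconcave with constant $c_V$'' makes your argument close, with $C_2=C_{conc}+T\,c_V$. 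The trajectory-rejoining variant you mention indeed only yields a $|y|^{3/2}$ modulus under Lipschitz $V$ (and would require uniform convexity of $H_0$, which (H1) does not grant), so it is not a way around the extra hypothesis.
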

%%%
The solution of the scheme \eqref{HJh} converges to the viscosity solution of \eqref{HJb}  with error estimates of order  $\half$ (see \cite{cdi,dfn}).
\begin{thm} 
	Let $\{u^\dt_n\}_{n=0}^N$ be the solution of \eqref{HJh} and   $u$   the viscosity solution of \eqref{HJb}. Then
	\begin{equation}\label{stima_Linfty_semilag}
		\sup_{n=0,\dots, N-1}\left\|u^\dt_n(\cdot) - u(\cdot,n\Delta t)\right\|_{L^\infty(\T^d)}\le C\dt^\half.
	\end{equation}
\end{thm}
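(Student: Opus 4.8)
The plan is to compare the value functions \eqref{value} and \eqref{valueh} directly at the level of their control representations, exploiting that the dynamics \eqref{dyn} and \eqref{dynh} are driven purely through $\dot X=-\alpha$ with no state dependence: this makes it possible to transfer controls between the continuous and the discrete problem while keeping the corresponding trajectories matched at the grid times $n\dt$. I will prove the two one-sided bounds $u^\dt_n(x)\le u(x,n\dt)+C\dt$ and $u(x,n\dt)\le u^\dt_n(x)+C\dt$ separately; since $\dt\le\dt^{1/2}$ for $\dt\le1$, this is in fact stronger than \eqref{stima_Linfty_semilag}. The only regularity inputs are the $\dt$-independent Lipschitz bound on $\{u^\dt_n\}$ from Lemma~\ref{lemma:prop_semilag}(ii) and the Lipschitz continuity of $g$ and $V$ (semiconcavity, needed later for the $L^1$ estimates, plays no role here). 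A preliminary point used on both sides is that near-optimal controls may always be chosen with $|\alpha|\le R$ a.e., with $R$ independent of $\dt$: in the discrete case this follows from the dynamic programming step \eqref{HJh}, since by Lemma~\ref{lemma:prop_semilag}(ii) the objective $\alpha\mapsto u^\dt_{n+1}(x-\alpha\dt)+\dt(L(\alpha)+V(x))$ is bounded below by $u^\dt_{n+1}(x)-C_1\dt|\alpha|+\dt\,L(\alpha)+\dt V(x)$, and the coercivity of $L$ --- a consequence of (H1)--(H2) through the Fenchel--Young bound $L(\alpha)\ge p\cdot\alpha-H_0(p)$ applied with $|p|$ somewhat larger than $C_1$ --- forces any minimizer to satisfy $|\alpha|\le R$; the continuous case is analogous, using the Lipschitz regularity of the viscosity solution $u$. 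In particular every trajectory appearing below satisfies $|\dot X|\le R$ a.e.

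\emph{Upper bound.} Fix $(x,n)$, pick $\alpha^\ast\in\mathcal A(n\dt)$ with $|\alpha^\ast|\le R$ and $J(\alpha^\ast;x,n\dt)\le u(x,n\dt)+\dt$, and set $X^\ast:=X^{x,n\dt}[\alpha^\ast]$. Define the discrete control $\alpha=(\alpha_n,\dots,\alpha_{N-1})$ by $\alpha_k:=\dt^{-1}\int_{k\dt}^{(k+1)\dt}\alpha^\ast(s)\,ds$. Because the dynamics is affine and state-independent, summing \eqref{dynh} gives $X_k=x-\dt\sum_{i=n}^{k-1}\alpha_i=x-\int_{n\dt}^{k\dt}\alpha^\ast(s)\,ds=X^\ast(k\dt)$ for all $k$. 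Jensen's inequality and convexity of $L$ give $\dt\,L(\alpha_k)\le\int_{k\dt}^{(k+1)\dt}L(\alpha^\ast(s))\,ds$; the terminal term $g(X_N)=g(X^\ast(T))$ matches exactly; and $|\dt\,V(X^\ast(k\dt))-\int_{k\dt}^{(k+1)\dt}V(X^\ast(s))\,ds|\le\mathrm{Lip}(V)R\dt^2/2$ since $X^\ast$ is $R$-Lipschitz, so summing over $k$ the potential terms differ by at most $\mathrm{Lip}(V)RT\dt/2$. Adding the three contributions, $u^\dt_n(x)\le J^\dt(\alpha;x,n)\le J(\alpha^\ast;x,n\dt)+C\dt\le u(x,n\dt)+(C+1)\dt$.

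\emph{Lower bound.} Conversely, let $\alpha=(\alpha_n,\dots,\alpha_{N-1})$ be an optimal discrete control for $u^\dt_n(x)$, so that $|\alpha_k|\le R$ and $J^\dt(\alpha;x,n)=u^\dt_n(x)$, and extend it to the piecewise-constant control $\tilde\alpha(s):=\alpha_k$ for $s\in[k\dt,(k+1)\dt)$; put $\tilde X:=X^{x,n\dt}[\tilde\alpha]$. As $\tilde\alpha$ is constant on each subinterval, $\tilde X(k\dt)=X_k$ for all $k$ and $|\tilde X(s)-X_k|\le R\dt$ on $[k\dt,(k+1)\dt)$. Now $\int_{n\dt}^T L(\tilde\alpha(s))\,ds=\sum_k\dt\,L(\alpha_k)$ and $g(\tilde X(T))=g(X_N)$ hold with equality, while the potential term differs by at most $\mathrm{Lip}(V)RT\dt/2$, exactly as above. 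Hence $u(x,n\dt)\le J(\tilde\alpha;x,n\dt)\le J^\dt(\alpha;x,n)+C\dt=u^\dt_n(x)+C\dt$. Combining the two bounds and taking the supremum over $x\in\T^d$ and over $n$ gives \eqref{stima_Linfty_semilag}, in fact with rate $\dt$.

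The main obstacle --- the only ingredient that is not routine bookkeeping --- is the uniform a priori bound $|\alpha|\le R$ for (near-)optimal controls of the discrete scheme, valid uniformly as $\dt\to0$; once it is in force, every trajectory entering the comparison is uniformly Lipschitz in time and the two one-sided estimates follow from elementary Riemann-sum error bounds, convexity of $L$, and the trajectory-matching identities above. This control-theoretic comparison is, however, specific to the separable structure $H(x,p)=H_0(p)-V(x)$ and to the semi-discrete scheme; the alternative, and the route followed in \cite{cdi,dfn}, is to regard $u^\dt$ as a generalized solution of \eqref{HJh} and run the Crandall--Lions doubling-of-variables argument against the viscosity solution $u$, which applies far more generally but yields only the rate $\dt^{1/2}$ stated in \eqref{stima_Linfty_semilag}.
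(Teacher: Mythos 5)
Your proof is correct in substance, but it takes a completely different route from the paper: the paper does not prove this theorem at all, it simply quotes the $\dt^{1/2}$ rate from \cite{cdi,dfn}, which is obtained there by viscosity-solution comparison (doubling of variables) and holds for general monotone schemes. Your direct control-theoretic comparison is legitimate precisely because the dynamics \eqref{dyn}, \eqref{dynh} are state-independent and the Hamiltonian is separable as in \eqref{Ham_separata}: averaging a continuous control over each time slab and, conversely, extending a discrete control piecewise constantly matches the trajectories exactly at the grid times, so the only error sources are Jensen's inequality for $L$ (one-sided, in your favour) and the Riemann-sum error on $\int V(X(s))\,ds$, which is $O(\dt)$ once the controls are uniformly bounded. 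The one step you should not wave away is the uniform bound $|\alpha^\ast|\le R$ for near-optimal controls of the \emph{continuous} problem: under (H1)--(H2) the Lagrangian $L=H_0^\ast$ is superlinear (since $H_0$ is finite on all of $\R^d$), so Tonelli gives minimizers, and the Fenchel--Young argument you use in the discrete case, combined with the Lipschitz bound on $u$, bounds their velocities; alternatively one can invoke the standard fact that restricting to $|\alpha|\le R$ does not change the value function \eqref{value}. With that in place your argument actually proves the stronger estimate
\begin{equation*}
	\sup_{n}\big\|u^\dt_n(\cdot)-u(\cdot,n\dt)\big\|_{L^\infty(\T^d)}\le C\,\dt ,
\end{equation*}
which is consistent (for $V\equiv 0$ and $H_0(p)=|p|^2/2$ the semi-discrete scheme reproduces the Hopf--Lax formula exactly) and is not a contradiction of \eqref{stima_Linfty_semilag}, merely sharper. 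It is worth flagging that this first-order $L^\infty$ bound, fed into the interpolation argument of Corollary~\ref{cor:stima_Lp_semilag}, would upgrade the semi-discrete $L^p$ rates to $O(\dt)$ for every $p$, so if you intend to present this proof you should also note that the $\dt^{1/2}$ exponent in \eqref{stima_Linfty_semilag} is only needed for the fully discrete scheme, where the spatial interpolation genuinely degrades the rate. The trade-off is that your argument is tied to the separable structure $H(x,p)=H_0(p)-V(x)$ and to exact trajectory matching, whereas the cited doubling-of-variables proof covers general $H(x,p)$ and general monotone schemes at the cost of the $1/2$ exponent.
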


It is clear that for every $(x,n) \in \T^{d}\times \{0,...,N-1\}$ there exists a least one optimal control law $\alpha\in \mA^\dt(n) $ such that
\begin{equation}	\label{optimal_control_law}           
	  u^\dt_n(x)=J^{\Delta t}(\alpha;x,n).
\end{equation} 
We denote by $\mA^{\dt,*}(x,n)$   the set of discrete controls $\alpha\in \mA^{\dt}(n)$ such that \eqref{optimal_control_law}  holds. The next lemma gives a crucial information about the relation between uniqueness of the optimal control law and the differentiability 
of $u^\dt_n$ at $x$ (for the proof we refer again to \cite[Lemma 3.3]{csl}).
%%%%
\begin{lemma}\label{lemma:optimal_control}
Let $(x,n) \in \T^{d}\times \{0,...,N-1\}$ and $\alpha \in \mA^{\dt,*}(x,n)$ such that \eqref{optimal_control_law} holds. 
 Then, 
 \begin{itemize}
 	\item[(i)]
 		  The set $\mA^{\dt,*}(x,n) $  is bounded by a constant independent of $\dt$.
 \item[(ii)] For any  $\alpha \in \mA^{\dt,*}(x,n)$ and $k=n+1,..., N-1$, we have   $ \mA^{\dt,*}(X_{k}^{x,n}[\alpha],k)= \left\{ (\alpha_{k},..., \alpha_{N-1})\right\}.$
\item[(iii)] The function $u^\dt_n(\cdot)$ is differentiable at $x$  iff there exists $ {\alpha}(x,n) \in \mA^{\dt}(n)$ such that  $\mA^{\dt,*}(x,n)= \{ {\alpha}(x,n)\}$. In that case,
 	\begin{equation}\label{optimal_control_1}
 		D u^\dt_n(x)= D_\alpha L( {\alpha}_{n}(x,n))+ \dt DV(x).
 	\end{equation}
 	In particular, the problem associated with $u^\dt_n(x,n)$ has a unique solution a.e. in $\T^{d}$. 
\item[(iv)] For all $\alpha \in \mA^{\dt,*}(x,n)$,  the following relation hold true:
 	\begin{equation}\label{optimal_control_2}
 		Du^\dt_{k+1}(X_{k}-\dt\alpha_{k}(X_{k},k)) =D_\alpha L(\alpha_{k}(X_{k},k))   \quad \mbox{for } \, k=n,..., N-1.
 	\end{equation} 
 \end{itemize}

\end{lemma}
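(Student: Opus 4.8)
The statement collects the fully discrete versions of classical facts connecting optimal trajectories with the differentiability of the value function of a control problem (cf.\ \cite{cs}), and the plan is to transcribe that circle of ideas into the discrete dynamic programming setting \eqref{HJh}, using Lemma~\ref{lemma:prop_semilag} throughout. The starting point is the \emph{principle of optimality}: if $\alpha\in\mA^{\dt,*}(x,n)$, then for every $k\in\{n,\dots,N-1\}$ the tail $(\alpha_k,\dots,\alpha_{N-1})$ is optimal for $(X_k^{x,n}[\alpha],k)$ --- otherwise the cost of $\alpha$ could be strictly lowered by rerouting after time $k$. This already yields the inclusion ``$\supseteq$'' in (ii).

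To prove (i) I would fix $\alpha\in\mA^{\dt,*}(x,n)$ and, at each $k$, apply \eqref{HJh} at $(X_k,k)$ and compare the optimal first control $\alpha_k$ with the null control; since the tail is optimal this gives $\dt L(\alpha_k)\le \dt L(0)+u^\dt_{k+1}(X_k)-u^\dt_{k+1}(X_k-\dt\alpha_k)\le \dt L(0)+C_1\dt|\alpha_k|$, with $C_1$ the uniform Lipschitz constant of Lemma~\ref{lemma:prop_semilag}(ii). Since $H_0$ is finite on $\mathbb{R}^d$, its Legendre transform $L$ is superlinear, so $\{z:L(z)\le L(0)+C_1|z|\}$ is bounded and hence $|\alpha_k|\le R$ for a constant $R$ depending only on $C_1$ and $L$, not on $\dt$, $x$ or $n$. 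For the first-order relations, I would use that by \eqref{HJh} the control $\alpha_k$ minimizes $\alpha\mapsto u^\dt_{k+1}(X_k-\dt\alpha)+\dt L(\alpha)$ over $\mathbb{R}^d$; writing the optimality condition at this interior minimizer and combining the convexity of $L$ with the semiconcavity of $u^\dt_{k+1}$ (and its differentiability at $X_{k+1}$, supplied below; for $k=N-1$ one argues at points of differentiability of $g$) yields $Du^\dt_{k+1}(X_k-\dt\alpha_k)=D_\alpha L(\alpha_k)$, which is \eqref{optimal_control_2}, while \eqref{optimal_control_1} follows by differentiating the identity $u^\dt_n(x)=\dt L(\alpha_n)+\dt V(x)+u^\dt_{n+1}(x-\dt\alpha_n)$ and inserting this relation.

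The heart of the matter is the equivalence in (iii) together with the differentiability of $u^\dt_n$ at interior points of optimal trajectories, which is what upgrades ``$\supseteq$'' to equality in (ii). For ``differentiable $\Rightarrow$ unique optimal control'', I would note that any $\alpha\in\mA^{\dt,*}(x,n)$ makes $y\mapsto \dt L(\alpha_n)+\dt V(y)+u^\dt_{n+1}(y-\dt\alpha_n)$ a supporting function of $u^\dt_n$ at $x$; minimality at $x$, combined with the semiconcavity of $u^\dt_{n+1}$, forces the superdifferential of $u^\dt_{n+1}$ at $X_{n+1}$ to be the singleton $\{Du^\dt_n(x)-\dt DV(x)\}$ (so $u^\dt_{n+1}$ is differentiable at $X_{n+1}$), and then the first-order relation gives $D_\alpha L(\alpha_n)=Du^\dt_n(x)-\dt DV(x)$; since $L$ is strictly convex (because $H_0$ is differentiable), this determines $\alpha_n$, and the same argument applied forward along the trajectory determines the whole control. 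For the converse and for the uniqueness in (ii), I would show that whenever $x$ is reached at time $n$ by an optimal trajectory launched at some earlier time $m<n$, the value $u^\dt_n$ is differentiable at $x$: writing $u^\dt_n(x)=u^\dt_m(y)-W(y,m,x,n)$, where $W$ is the minimal running cost steering $y$ to $x$ over $[m,n]$ (equal, by optimality, to the cost of the restricted optimal trajectory), one has $u^\dt_n(z)\ge u^\dt_m(y)-W(y,m,z,n)$ for $z$ near $x$ with equality at $z=x$, so that any supergradient of $W(y,m,\cdot,n)$ at $x$ produces a subgradient of $u^\dt_n$ at $x$; since a semiconcave function with nonempty subdifferential at a point is differentiable there, $u^\dt_n$ is differentiable at $x$. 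Applying this at the interior points $X_k$ ($k\ge n+1$) of an optimal trajectory and invoking the equivalence gives the singleton statement in (ii); the a.e.\ assertion in (iii) then follows from the Lipschitz continuity of $u^\dt_n$ (Rademacher) together with the equivalence, and (iv) is read off from the first-order relations along the optimal trajectory.

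The step I expect to be the main obstacle is the last one: producing a one-sided differential of $u^\dt_n$ at an interior trajectory point when $V$ is only Lipschitz, so that $W(y,m,\cdot,n)$ is merely Lipschitz in its endpoint and superdifferentiability at the distinguished point $x$ cannot be read off from convexity alone. The way around it is to split $W$ into its $L$-contribution --- a rescaled copy of $L$, which is smooth at the average of the intermediate optimal controls since those lie, by (i), in the interior of the domain of $L$ --- and its $V$-contribution, controlling the latter by means of the uniform semiconcavity bound of Lemma~\ref{lemma:prop_semilag}(iii); an alternative is to bypass $W$ entirely and argue with reachable gradients, using that every nearby point of differentiability carries a bounded optimal control by (i) and passing to the limit by stability of minimizers. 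All the remaining steps are routine manipulations of \eqref{HJh} and the estimates of Lemma~\ref{lemma:prop_semilag}.
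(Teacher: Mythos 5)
The paper does not prove this lemma at all: it is quoted verbatim with a pointer to \cite[Lemma 3.3]{csl}, so there is no in-paper argument to compare against. Your outline reconstructs the standard circle of ideas behind that reference (discrete dynamic programming principle for the tail inclusion in (ii); comparison with the null control plus superlinearity of $L=H_0^*$ for (i); first-order conditions at the interior minimizer of $\alpha\mapsto u^\dt_{k+1}(X_k-\dt\alpha)+\dt L(\alpha)$ for (iv) and \eqref{optimal_control_1}; ``semiconcave $+$ nonempty subdifferential $\Rightarrow$ differentiable'' for (iii) and for upgrading the tail inclusion to equality), and parts (i), the a.e.\ statement, and the ``$\supseteq$'' half of (ii) are correct and complete as written.

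The part that is not closed is precisely the step you flag, but you misdiagnose where the difficulty sits and propose workarounds that do not resolve it. The multi-step cost $W(y,m,\cdot,n)$ and its $V$-contribution are a red herring: to show $u^\dt_k$ is differentiable at $X_k$ for $k\ge n+1$ you only need the one-step identity $u^\dt_{k-1}(X_{k-1})=\dt V(X_{k-1})+\inf_z\bigl[u^\dt_k(z)+\dt L\bigl((X_{k-1}-z)/\dt\bigr)\bigr]$, in which $V$ enters only through the frozen point $X_{k-1}$ and is therefore irrelevant to the perturbation of the endpoint $z$. The genuine obstruction is elsewhere: minimality at $z=X_k$ gives the concave minorant $z\mapsto u^\dt_k(X_k)+\dt L(\alpha_{k-1})-\dt L\bigl((X_{k-1}-z)/\dt\bigr)$ touching $u^\dt_k$ from below, and this produces an element of $D^-u^\dt_k(X_k)$ (hence differentiability, by semiconcavity) \emph{only if $L$ is differentiable at $\alpha_{k-1}$}, equivalently only if $H_0$ is strictly convex. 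The same issue affects the very formulas \eqref{optimal_control_1}--\eqref{optimal_control_2}, which presuppose $D_\alpha L$ exists at the optimal controls; under (H1) alone ($H_0$ convex, not strictly convex) the first-order condition only yields the set inclusion $D^+u^\dt_{k}(X_{k})\subseteq\partial L(\alpha_{k-1})$, which does not force a singleton. Your appeal to ``$L$ strictly convex because $H_0$ is differentiable'' addresses uniqueness of $\alpha_n$ given $D_\alpha L(\alpha_n)$, but not the existence of $D_\alpha L(\alpha_n)$ itself; so either strict convexity of $H_0$ must be added as a hypothesis (as is implicit in the statement and in \cite{csl}), or this step must be argued differently. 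With that assumption granted, your plan goes through; without it, the proof as proposed has a gap at exactly this point.
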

\begin{rem}
	We observe that, for $k=n$, \eqref{optimal_control_2} reads as
	\begin{equation}\label{optimal_control_3}
		Du^\dt_{n+1}(x-\dt\alpha_{n}(x,n)) =D_\alpha L(\alpha_{n}(x,n))   \quad \mbox{for } \, k=n,..., N-1.
	\end{equation}
\end{rem}

We  prove the $L^1$ estimate for the semi-Lagrangian scheme. The idea of the proof is similar to the one of Theorem \ref{thm:stima_L1_finitediff}.	 
\begin{thm}[$L^1$ error estimate; semi-Lagrangian scheme]\label{thm:stima_L1_semilag}
	Let $u$ be the solution of \eqref{HJb} and $\{u^\dt_n\}_{n=0}^N$ the  solution of \eqref{HJh}. Under \textup{(H1)-(H2), (V1), (G1)},  there exists \(C>0\) depending on \(T,H,g\) such that
	\begin{equation}\label{stima_L1_semilag}
		\sup_{n=0,\dots,N}\big\|u_n^\dt(\cdot)-u(\cdot,n\Delta t)\big\|_{L^1(\mathbb{T}^d)} \le C\,\dt.
	\end{equation}
\end{thm}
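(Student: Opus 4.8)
The plan is to mirror the adjoint-method argument of Theorem~\ref{thm:stima_L1_finitediff}, but now the role of the "compatibility defect" $F(-Du^\varepsilon,Du^\varepsilon)-F(-\delta_h u^\varepsilon_n,\delta_{-h}u^\varepsilon_n)$ is played by the time-discretization error of the control problem. First I would introduce the time-linear interpolation $u^\dt(x,t)$ of $\{u^\dt_n\}$ (as in \eqref{time_interpolation}, but running backward in time), so that $\partial_t u^\dt = (u^\dt_{n+1}-u^\dt_n)/\dt$ on each slab $t\in[n\dt,(n+1)\dt)$. Using the dynamic programming equation \eqref{HJh} together with Lemma~\ref{lemma:optimal_control}(iii)--(iv) — which at a point of differentiability gives $Du^\dt_n(x)=D_\alpha L(\alpha_n(x,n))+\dt DV(x)$ and $Du^\dt_{n+1}(x-\dt\alpha_n(x,n))=D_\alpha L(\alpha_n(x,n))$, hence $H_0(Du^\dt_n(x)-\dt DV(x)) = \alpha_n(x,n)\cdot D_\alpha L(\alpha_n(x,n)) - L(\alpha_n(x,n))$ by Legendre duality — I would extract the PDE satisfied a.e.\ by $u^\dt$, namely
\[
-\partial_t u^\dt + H_0(Du^\dt) - V(x) = \mathcal{E}^\dt(x,t),
\]
where $\mathcal{E}^\dt$ collects the error between the one-step discrete Bellman update and the differential Hamiltonian. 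The key estimate here is $\|\mathcal E^\dt(\cdot,t)\|_{L^1(\T^d)}\le C\dt$, uniformly in $t$; this follows because $\mathcal{E}^\dt$ is a quadratic-in-$\dt$ remainder in the expansion of $u^\dt_{n+1}(x-\dt\alpha)$ around $x$, controlled using the uniform Lipschitz and semiconcavity bounds of Lemma~\ref{lemma:prop_semilag} (the semiconcavity bound on $\int_{\T^d}|D^2 u^\dt_n|$ via Lemma~\ref{lemma:bound_hessian}(ii)), together with the uniform bound on optimal controls from Lemma~\ref{lemma:optimal_control}(i).

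Next I would set $z:=u^\dt-u$, subtract the backward HJ equation \eqref{HJb} from the equation for $u^\dt$, and linearize $H_0$:
\[
-\partial_t z + b(x,t)\cdot Dz = \mathcal E^\dt(x,t),\qquad b(x,t):=\int_0^1 D_pH_0\big(sDu^\dt+(1-s)Du\big)\,ds,
\]
with $z(\cdot,T)=0$ since $u^\dt_N=g=u(\cdot,T)$. The coefficient $b$ is bounded because both $Du^\dt$ and $Du$ are uniformly Lipschitz in $x$ (Lemma~\ref{lemma:prop_semilag}(ii) and standard regularity of the value function). Then I would fix $\tau=\bar n\dt$ and introduce the adjoint (forward transport) equation $\partial_t\rho - \operatorname{div}(b\rho)=0$ on $\T^d\times(\tau,T)$ with terminal data... — wait, since the HJ equation runs backward, the duality pairing runs forward: I fix $\tau$ and solve $\partial_t\rho + \operatorname{div}(b\rho)=0$ on $(\tau,T)$ with $\rho(\cdot,\tau)=\operatorname{sgn}(z(\cdot,\tau))$, so that $\|\rho\|_\infty\le1$ and mass is transported forward to time $T$ where $z$ vanishes. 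Multiplying the equation for $z$ by $\rho$, integrating over $\T^d\times(\tau,T)$, and integrating by parts (using $z(\cdot,T)=0$) yields the duality identity
\[
\int_{\T^d}|z(x,\tau)|\,dx = \int_\tau^T\!\!\int_{\T^d}\mathcal E^\dt(x,t)\,\rho(x,t)\,dx\,dt,
\]
which is bounded by $\|\rho\|_{L^\infty}\int_\tau^T\|\mathcal E^\dt(\cdot,t)\|_{L^1(\T^d)}\,dt\le C\dt(T-\tau)\le CT\dt$. Finally, passing from $u^\dt(\cdot,\tau)$ to the grid-time values $u^\dt_n$ costs only another $O(\dt)$ by the Lipschitz-in-time bound \eqref{estim_Lip_corrected}, and taking the supremum over $n$ gives \eqref{stima_L1_semilag}.

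The well-posedness of the adjoint transport equation with merely bounded (BV-type, via the semiconcavity of $u^\dt$ and $u$) divergence of $b$ is handled exactly as in the finite-difference case, citing \cite[pag.~710]{lt} and \cite[Thm.~4.12]{f}; note $\operatorname{div} b$ is bounded above in the distributional sense because $D^2u^\dt$ and $D^2u$ are bounded above by semiconcavity and $D_pH_0$ is monotone, which is what those references require.

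\textbf{Main obstacle.} The crux is the derivation and $L^1$-bound of the consistency error $\mathcal E^\dt$. Unlike the finite-difference case, where the defect is a clean difference of numerical and exact Hamiltonians and Lemma~\ref{lemma:finite_difference} applies directly, here one must carefully Taylor-expand the discrete Bellman operator: writing $u^\dt_{n+1}(x-\dt\alpha)=u^\dt_{n+1}(x)-\dt\alpha\cdot Du^\dt_{n+1}(x)+O(\dt^2|D^2u^\dt_{n+1}|)$ inside the infimum, one identifies the leading terms with $-\dt(-\partial_t u^\dt + H_0(Du^\dt)-V)$ via the Legendre relation $H_0(p)=\sup_\alpha(\alpha\cdot p - L(\alpha))$ and the optimality characterization of Lemma~\ref{lemma:optimal_control}, and the remainder is genuinely second-order in $\dt$ but involves the second differences of $u^\dt_{n+1}$, which are only bounded in $L^1$ (not $L^\infty$) — this is precisely where semiconcavity, rather than $C^2$ regularity, must be exploited, and where care is needed to ensure the $O(\dt^2)$ remainder, once summed over $N=T/\dt$ slabs, contributes only $O(\dt)$.
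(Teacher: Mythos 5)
Your proposal follows essentially the same route as the paper: time-linear interpolation, linearization of $H_0$, duality with the transport equation having data $\operatorname{sgn}(z(\cdot,\tau))$, and an $L^1$ bound on the Bellman consistency error obtained from the Taylor remainder, the uniform bound on optimal controls, and the $L^1$ Hessian bound coming from semiconcavity (the paper merely splits your $\mathcal{E}^{\Delta t}$ explicitly into the time-interpolation part, handled via the optimality relations of Lemma~\ref{lemma:optimal_control}, and the discrete-Bellman part). Only note that your second guess for the adjoint equation has the wrong sign: for $-\partial_t z + b\cdot Dz = \mathcal{E}^{\Delta t}$ the correct dual is $\partial_t\rho - \operatorname{div}(b\rho)=0$ forward on $(\tau,T)$, as in \eqref{HJ_adjoint}.
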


%%%%%
\begin{proof}
	We rewrite the first equation in \eqref{HJh} as
	\[
		\frac{u^\dt_{n}(x)-u^\dt_{n+1}(x)}{\dt}+F[u^\dt_{n+1}](x)-V(x)=0
	\]		
where 
	\[
	F[u^\dt_{n+1}](x)=	\sup_{\alpha\in \R^d}\left\{-\frac{u^\dt_{n+1}(x-\dt \alpha)-u^\dt_{n+1}(x)}{\dt}- L(\alpha)\right\}.
	\]	
	For \(n=0,\dots,N-1\) and \(t\in[n\Delta t,(n+1)\Delta t]\), define
	\begin{equation}\label{tim_inter_semilagr}
		u^\dt(x,t)=\Big(1-\frac{t-n\Delta t}{\Delta t}\Big)u^\dt_n(x)
		+\frac{t-n\Delta t}{\Delta t}\,u^\dt_{n+1}(x).
	\end{equation}
	Since $\partial_t u^\dt(x,t)= (u^\dt_{n+1}(x)-u^\dt_{n}(x))/\dt$, the function $z(x,t)=u^\dt(x,t)-u(x,t)$    solves  the equation
	\begin{equation}\label{HJ_pert}
		\begin{split}
			&z_t - b(x,t)Dz=F[u^\dt_{n+1}]-H_0(D u^\dt(t))\qquad t\in [n\dt, (n+1)\dt] 
		\end{split}
	\end{equation}
	with 
	\[ b(x,t)=\int_0^1D_pH_{0}(sD u^\dt+(1-s)D u)ds.\]
	Let $\tau=\bar n \Delta t$ for $\bar n\in\{0, \dots,N-1\}$. We multiply   equation \eqref{HJ_pert} for  the solution $\rho$ of the forward conservative equation
	\begin{equation}\label{HJ_adjoint}
	\left\{	\begin{array}{ll}
			\partial_t \rho -\mathrm{div}(b(x,t)\rho)=0\quad &\text{in $\T^d\times (\tau,T)$}\\
			\rho(\cdot, \tau)=\mathrm{sgn}(z(\cdot, \tau))  &\text{on $\T^d$},
		\end{array}
		\right.		
	\end{equation}
 and integrate on $\mathbb{T}^d\times (\tau,T)$  to get by duality
	and $z(x,T)=0$
	\begin{equation}\label{stima_1_semi}
		\begin{split}
			&\int_{\T^d}\rho(x,\tau)\,z(x,\tau)\,dx
			= \sum_{n=\bar n}^{N-1}\int_{n\dt}^{(n+1)\dt}\int_{\T^d}\Big[H_{0}(D u^\dt(t))-F[u^\dt_{n+1}]\Big]\rho \,dxdt\\
			&=\sum_{n=\bar n}^{N-1}\int_{n\dt}^{(n+1)\dt}\int_{\T^d}\Big[H_{0}(D u^\dt(t))-H_{0}(Du^\dt_{n+1})\Big]\rho dxdt\\
			&+\sum_{n=\bar n}^{N-1}\int_{n\dt}^{(n+1)\dt}\int_{\T^d}\Big[H_{0}(Du_{n+1}^\dt)-F[u^\dt_{n+1}]\Big]\rho dxdt
		\end{split}
	\end{equation}
	We estimate the first term  on the left hand side of \eqref{stima_1_semi}. We have
	\begin{equation}\label{stima_1_a}
		\begin{split}
			&	\int_{\T^d}\Big[H_{0}(D u^\dt(t))-H_{0}(Du^\dt_{n+1})\Big]\rho dx\\
			&	=\int_{\T^d}\left(\int_0^1 D_pH_{0}(sD u^\dt+(1-s)D u^\dt_{n+1})ds\right)(D u^\dt(t)- D u^\dt_{n+1})\rho dx\\
			&\le M\|\rho\|_{L^\infty(\T^d\times[\tau,T])}\int_{\T^d}|D u^\dt(t)-D u^\dt_{n+1}|\,dx.
		\end{split}
	\end{equation}
We have
\begin{equation}\label{stima_1a_semilag}
Du^\dt(x,t)- Du^\dt_{\,n+1}(x)
= \bigg(\frac{t-n\dt}{\dt}-1\bigg)D\big(u^\dt_{n+1}(x)-u^\dt_{\,n}(x)\big)
\end{equation}	
By	  \eqref{optimal_control_1} and	  \eqref{optimal_control_3}, we get
\[
D  u^\dt_n(x)=Du^\dt_{n+1}(x-\dt\alpha_{n}(x,n))+ \dt DV(x)
\]
and, replacing in 	\eqref{stima_1a_semilag}, we get 	
\begin{equation}\label{stima_1c_semilag}
		D u^\dt(t)- D u^\dt_{n+1}= \bigg(\frac{t-n\dt}{\dt}-1\bigg)D \left[u^\dt_{n+1}(x)- u^\dt_{n+1}(x-\dt\alpha_{n}(x,n)) -\dt V(x)  \right]
\end{equation}
We have
\[
D u^\dt_{n+1}(x)-D u^\dt_{n+1}(x-\dt\,\alpha_n(x,n))
=\alpha_n(x,n)\dt\,\int_0^1 D^2u^\dt_{n+1}(x-\tau \dt\,\alpha_n(x,n))\,d\tau ,
\]
and hence
\[
\bigl|D u^\dt_{n+1}(x)-D u^\dt_{n+1}(x-\dt\,\alpha_n(x,n))\bigr|
\le |\dt\,\alpha_n(x,n)|\int_0^1 |D^2u^\dt_{n+1}(x-\tau \dt\,\alpha_n(x,n))|\,d\tau.
\]
By the uniform semiconcavity of $u^\dt$ and Lemma \ref{lemma:optimal_control}.(i), integrating over $\T^d$ and using translation invariance we get
\begin{equation}\label{stima_1d_semilag}
	\int_{\T^d}\bigl|D u^\dt_{n+1}(x)-D u^\dt_{n+1}(x-\dt\,\alpha_n(x,n))\bigr|\,dx
\le \dt\,\|\alpha_n(\cdot,n)\|_{L^\infty}\,\int_{\T^d}|D^2u^\dt_{n+1}|dx .
\end{equation}
By \eqref{stima_1c_semilag}, \eqref{stima_1d_semilag} and \textup{(V1)}, we get
\begin{equation}\label{stima_1_semilag}
\sum_{n=\bar n}^{N-1}\int_{n\dt}^{(n+1)\dt}\int_{\T^d}\Big[H_{0}(D u^\dt(t))-H_{0}(Du^\dt_{n+1})\Big]\rho dx\le C\dt.
\end{equation}
%%%
We now consider the second term in \eqref{stima_1_semi}. Let ${\bar \alpha(x)}$ be such that
\[H_{0}(Du^\dt_{n+1}(x))=-{\bar \alpha(x)}\cdot Du^\dt_{n+1}(x)-L({\bar \alpha(x)}).\] Then
\[
H_{0}(Du^\dt_{n+1}(x))-F[u^\dt_{n+1}](x)\le \frac{u^\dt_{n+1}(x-\dt{\bar \alpha(x)})-u^\dt_{n+1}(x)}{\dt}-{\bar \alpha(x)}\cdot Du^\dt_{n+1}(x)
\]
Since $\|\bar \alpha\|_{L^\infty}\le C$  and taking into account Lemma \ref{lemma:bound_hessian}.(ii) and Lemma \ref{lemma:prop_semilag}.(iii),  we can estimate 
\begin{equation}\label{stima_2_semilag}
\begin{split}
	&\sum_{n=\bar n}^{N-1}\int_{n\dt}^{(n+1)\dt}\int_{\T^d}\Big[H_{0}(Du^\dt_{n+1})-F[u^\dt_{n+1}]\Big]\rho(x,t) dxdt \\
&	\le \|\rho\|_{L^\infty}\sum_{n=\bar n}^{N-1}\int_{n\dt}^{(n+1)\dt}\int_{\T^d}\Big|\frac{u^\dt_{n+1}(x-\dt{\bar \alpha})-u^\dt_{n+1}(x)}{\dt}-{\bar \alpha}\cdot Du^\dt_{n+1}(x)\Big|dx dt\\
	&\le  (T-\tau)   \|\rho\|_{L^\infty} C^2\dt  \int_{\T^d}\Big| D^2u^\dt_{n+1}(x)\Big|dx\le \bar C\dt.
\end{split}
\end{equation}
Replacing \eqref{stima_1_semilag} and \eqref{stima_2_semilag} in \eqref{stima_1_semi} 
, we get \eqref{stima_L1_semilag}.
\end{proof}
An argument similar to Cor. \ref{cor:stima_Lp} gives the following $L^p$-estimate.
\begin{cor}\label{cor:stima_Lp_semilag}
Assume the hypotheses of Theorem~\ref{thm:stima_L1_semilag}. Then for every finite $p\in [1,+\infty]$ there exists a constant $C>0$ such that
\[
\sup_{n=0,\dots,N}\big\| u_n^\dt(\cdot)-u(\cdot,n\Delta t)\big\|_{L^p(\T^d)}
\le C\,\dt^{\frac12+\frac{1}{2p}}.
\]
\end{cor}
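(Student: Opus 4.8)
The plan is to mirror the proof of Corollary~\ref{cor:stima_Lp}: interpolate between the $L^1$ bound just established in Theorem~\ref{thm:stima_L1_semilag} and the $L^\infty$ bound \eqref{stima_Linfty_semilag}. Fix $n\in\{0,\dots,N\}$ and write $w_n:=u_n^\dt(\cdot)-u(\cdot,n\dt)$. For $1<p<+\infty$, the elementary Lyapunov (H\"older) interpolation inequality
\[
\|w_n\|_{L^p(\T^d)}\le\|w_n\|_{L^1(\T^d)}^{1/p}\,\|w_n\|_{L^\infty(\T^d)}^{1-1/p}
\]
reduces the problem to inserting the two known rates.

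Next I would use $\|w_n\|_{L^1(\T^d)}\le C\dt$ from \eqref{stima_L1_semilag} and $\|w_n\|_{L^\infty(\T^d)}\le C\dt^{1/2}$ from \eqref{stima_Linfty_semilag}. (For $n=N$ there is nothing to prove, since $u_N^\dt=g=u(\cdot,T)$ forces $w_N\equiv0$.) Combining these, one obtains
\[
\|w_n\|_{L^p(\T^d)}\le C\,\dt^{1/p}\bigl(\dt^{1/2}\bigr)^{1-1/p}=C\,\dt^{\frac12+\frac1{2p}},
\]
with a constant $C$ independent of $n$ and $\dt$, since both input constants are. Taking the supremum over $n=0,\dots,N$ gives the stated bound; the endpoint cases $p=1$ and $p=+\infty$ are precisely the already-proven estimates \eqref{stima_L1_semilag} and \eqref{stima_Linfty_semilag}.

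There is essentially no obstacle here: the only points to check are the bookkeeping ones, namely that \eqref{stima_L1_semilag} and \eqref{stima_Linfty_semilag} hold with constants uniform in $\dt$ and are simultaneously valid on the index range used (which is clear after the trivial observation about $n=N$). Hence the corollary follows immediately from Theorem~\ref{thm:stima_L1_semilag}, the $L^\infty$ estimate \eqref{stima_Linfty_semilag}, and interpolation.
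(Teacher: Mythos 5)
Your proposal is correct and coincides with the paper's argument: the paper proves this corollary exactly by the interpolation $\|w_n\|_{L^p}\le\|w_n\|_{L^1}^{1/p}\|w_n\|_{L^\infty}^{1-1/p}$ applied to the $L^1$ bound of Theorem~\ref{thm:stima_L1_semilag} and the $L^\infty$ bound \eqref{stima_Linfty_semilag}, just as in Corollary~\ref{cor:stima_Lp}. Your extra remark handling $n=N$ (where the index range of \eqref{stima_Linfty_semilag} stops at $N-1$) is a harmless and correct piece of bookkeeping.
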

%%%%% The fully discrete scheme %%%%%%%%%%%%
\subsection{The fully discrete semi-Lagrangian scheme}
We now consider the fully discrete scheme, and exploit the same Cartesian periodic grid on $\mathbb{T}^d$ used in the subsection ~\ref{sec:fully_fd}. Let $v:\mathcal{G}\to \mathbb{R}$ be a grid function. Let us denote by $\mathcal{I}[v](\cdot)$ its piecewise-multilinear interpolation. The associated numerical scheme is as follows. 
\begin{equation}\label{HJhk}
\left\{
\begin{array}{ll}
u^\varepsilon_n(x_i)=\inf_{\alpha\in \R^{d}}\left\{\mathcal{I}[u^\varepsilon_{n+1}](x_i-\dt\alpha)+ \dt (L( \alpha)+V(x_i))\right\} \quad & \text{in }\mathcal{G}\times\{0,\dots,N-1\},\\[6pt]
u^\varepsilon_N(x_i)= g(x_i) & \text{on }\mathcal{G}.
\end{array}
\right.
\end{equation}
Then, we define the solution on the whole $\mathbb{T}^d$ by $\tilde u_n^\varepsilon(\cdot) := \mathcal{I}[u^\varepsilon_n](\cdot)$.

We now provide an estimate of the distance between the solutions of \eqref{HJhk} and \eqref{HJh} (see \cite{cfm} for the proof).
\begin{prop}\label{prop:stima_semi_fully_semilag} Let $\tilde u^{\varepsilon}$ be the piecewise-multilinear interpolant of the solution of \eqref{HJhk}, i.e., $\tilde u^{\varepsilon}_n =\mI[u^\varepsilon_n]
	$, and assume the hypotheses of Theorem \ref{thm:stima_L1_semilag}. Then,
	\begin{equation*}
		\sup_{n=0,\dots,N}\|u_n^\dt(\cdot) - \tilde u_n^{\varepsilon}(\cdot)\|_{L^\infty(\T^d)} \le C\frac{h}{\dt},
	\end{equation*}
	where $C$ is a constant independent on $\varepsilon$.
\end{prop}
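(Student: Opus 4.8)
\medskip
\noindent\textbf{Proof proposal.} The plan is a backward induction on $n$, tracking the quantity $e_n:=\|u^\dt_n-\tilde u^\varepsilon_n\|_{L^\infty(\T^d)}$ and showing that one step of the scheme introduces only an \emph{additive} $O(h)$ interpolation error. Since the scheme runs over $N=T/\dt$ time steps, summing these contributions yields a total of order $Nh=Th/\dt$, which is precisely the asserted bound. The two facts that make this work are a non-expansivity property of the semi-Lagrangian update in the $L^\infty$ norm and the standard $O(h)$ bound for multilinear interpolation of a Lipschitz function; the Lipschitz regularity being provided, uniformly in $\dt$, by Lemma~\ref{lemma:prop_semilag}(ii).

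I would first establish the estimate at grid points. Fix $n\le N-1$ and a node $x_i$. Both $u^\dt_n(x_i)$ (from \eqref{HJh}) and $u^\varepsilon_n(x_i)$ (from \eqref{HJhk}) are infima over $\alpha\in\R^d$ of expressions of the form $w(x_i-\dt\alpha)+\dt\bigl(L(\alpha)+V(x_i)\bigr)$, where the running cost $\dt(L(\alpha)+V(x_i))$ is the same in the two cases and only the value function changes, from $w=u^\dt_{n+1}$ to $w=\mathcal I[u^\varepsilon_{n+1}]=\tilde u^\varepsilon_{n+1}$. Using the elementary inequality $\bigl|\inf_\alpha(A(\alpha)+c(\alpha))-\inf_\alpha(B(\alpha)+c(\alpha))\bigr|\le\sup_\alpha|A(\alpha)-B(\alpha)|$ and the fact that $x_i-\dt\alpha$ ranges over all of $\T^d$, this gives $|u^\dt_n(x_i)-u^\varepsilon_n(x_i)|\le\|u^\dt_{n+1}-\tilde u^\varepsilon_{n+1}\|_{L^\infty(\T^d)}=e_{n+1}$. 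For a general $x$ in a cell $Q_i=x_i+[0,h)^d$ I would write $\tilde u^\varepsilon_n(x)=\sum_{\sigma\in\{0,1\}^d}\lambda_\sigma(x)\,u^\varepsilon_n(x_{i+\sigma})$ and insert the intermediate term $\sum_\sigma\lambda_\sigma(x)u^\dt_n(x_{i+\sigma})=\mathcal I[u^\dt_n](x)$, obtaining
\[
|u^\dt_n(x)-\tilde u^\varepsilon_n(x)|\;\le\;\bigl|u^\dt_n(x)-\mathcal I[u^\dt_n](x)\bigr|+\sum_{\sigma}\lambda_\sigma(x)\,\bigl|u^\dt_n(x_{i+\sigma})-u^\varepsilon_n(x_{i+\sigma})\bigr|\;\le\;C_1\sqrt d\,h+e_{n+1},
\]
where the first term is bounded as in the proof of Proposition~\ref{prop:stima_semi_fully} (each corner of $Q_i$ is within $\sqrt d\,h$ of $x$, the weights $\lambda_\sigma$ are nonnegative and sum to one, and $u^\dt_n$ is $C_1$-Lipschitz) and the second by $\sum_\sigma\lambda_\sigma(x)=1$.

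Taking the supremum over $x$ yields the recursion $e_n\le e_{n+1}+C_1\sqrt d\,h$ for $n=0,\dots,N-1$, with base case $e_N=\|g-\mathcal I[g]\|_{L^\infty(\T^d)}\le C_1\sqrt d\,h$ (since $u^\dt_N=g$, $u^\varepsilon_N=g$ on $\mathcal G$, and $\mathrm{Lip}(g)\le C_1$ by Lemma~\ref{lemma:prop_semilag}(ii) with $n=m=N$). Iterating gives $e_n\le (N-n+1)\,C_1\sqrt d\,h\le 2N\,C_1\sqrt d\,h=2T C_1\sqrt d\,h/\dt$, which is the claim with $C:=2\sqrt d\,C_1 T$.

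I do not expect a genuine obstacle: the argument is a soft combination of $L^\infty$-stability of the scheme step and the classical multilinear interpolation error. The only point worth emphasising is the mechanism behind the $1/\dt$ loss, in contrast to the finite-difference case of Proposition~\ref{prop:stima_semi_fully}: in the semi-Lagrangian update the foot of the characteristic $x_i-\dt\alpha$ is generically off-grid, so a fresh multilinear interpolation of $u^\varepsilon_{n+1}$ is performed at each of the $N$ time steps, and the corresponding $O(h)$ errors accumulate over the horizon; in the finite-difference scheme the fully discrete iterates coincide exactly with the semidiscrete solution on $\mathcal G$, so interpolation is incurred only once. (One could try to sharpen the per-step bound using the uniform semiconcavity of $u^\dt_n$ from Lemma~\ref{lemma:prop_semilag}(iii), but this only improves one of the two one-sided interpolation inequalities, so it does not change the global rate.)
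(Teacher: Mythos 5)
Your argument is correct. Note that the paper itself does not prove Proposition~\ref{prop:stima_semi_fully_semilag}: it defers the proof to the reference \cite{cfm}, so there is no in-paper proof to compare against. Your self-contained derivation is sound: the grid-point comparison via the elementary inequality $|\inf_\alpha f-\inf_\alpha g|\le\sup_\alpha|f-g|$ (legitimate here since $L$ is superlinear and $V$, $u^\dt_{n+1}$, $\mathcal I[u^\varepsilon_{n+1}]$ are bounded, so both infima are finite and attained) gives the non-expansive recursion $e_n\le e_{n+1}$ at the nodes, the splitting through $\mathcal I[u^\dt_n]$ adds the $O(h)$ multilinear interpolation error of the uniformly Lipschitz function $u^\dt_n$ (Lemma~\ref{lemma:prop_semilag}(ii)), and iterating over $N=T/\dt$ steps yields exactly the stated $Ch/\dt$ bound. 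Your closing remark correctly identifies why the $1/\dt$ loss occurs here but not in Proposition~\ref{prop:stima_semi_fully} (a fresh off-grid interpolation at every time step versus a single interpolation at the end). I see no gap.
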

Combining Theorem \ref{thm:stima_L1_semilag}, Corollary \ref{cor:stima_Lp_semilag}, and Proposition \ref{prop:stima_semi_fully_semilag}, we obtain the following $L^p$ estimate for the fully discrete scheme

\begin{cor}
	Let $u$ be the solution of \eqref{HJb}, and let $\tilde u_n^\varepsilon$ be defined as in  Prop. \ref{prop:stima_semi_fully_semilag}. Then
	\begin{equation*}
		\sup_{n=0,\dots,N}\big\| \tilde u_n^\varepsilon(\cdot)-u(\cdot,n\Delta t)\big\|_{L^1(\T^d)} \le C\left(\dt+\frac{h}{\dt}\right).
	\end{equation*}
	Moreover, for any $p\in [1,+\infty]$,
	\[
	\sup_{n=0,\dots,N}\|\tilde u_n^\varepsilon(\cdot)-u(\cdot,n\Delta t)\|_{L^p(\T^d)}\leq C\left(\dt+\frac{h}{\dt}\right)^{\frac{1}{p}}\left(\dt^\frac 12+\frac{h}{\dt}\right)^{1-\frac{1}{p}}.
	\]
	
\end{cor}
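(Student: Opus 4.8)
The plan is to combine the three ingredients in the natural way: the triangle inequality splits the total error into a ``scheme-to-semidiscrete'' piece and a ``semidiscrete-to-continuous'' piece. First I would write, for each fixed $n$,
\[
\|\tilde u_n^\varepsilon(\cdot)-u(\cdot,n\dt)\|_{L^1(\T^d)}
\le \|\tilde u_n^\varepsilon(\cdot)-u_n^\dt(\cdot)\|_{L^1(\T^d)}
+\|u_n^\dt(\cdot)-u(\cdot,n\dt)\|_{L^1(\T^d)}.
\]
The second term is bounded by $C\dt$ by Theorem~\ref{thm:stima_L1_semilag}. For the first term I would use that $\T^d$ has unit measure, so the $L^1$ norm is dominated by the $L^\infty$ norm, and then apply Proposition~\ref{prop:stima_semi_fully_semilag} to get $\|\tilde u_n^\varepsilon-u_n^\dt\|_{L^1(\T^d)}\le\|\tilde u_n^\varepsilon-u_n^\dt\|_{L^\infty(\T^d)}\le C\,h/\dt$. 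Adding the two bounds and taking the supremum over $n$ yields the claimed $L^1$ estimate $C(\dt+h/\dt)$.

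For the $L^p$ statement I would first establish an $L^\infty$ bound on the total error in the same way: by \eqref{stima_Linfty_semilag} the semidiscrete-to-continuous error is $O(\dt^{1/2})$, and by Proposition~\ref{prop:stima_semi_fully_semilag} the scheme-to-semidiscrete error is $O(h/\dt)$, so
\[
\sup_n\|\tilde u_n^\varepsilon(\cdot)-u(\cdot,n\dt)\|_{L^\infty(\T^d)}\le C\Big(\dt^{1/2}+\frac h\dt\Big).
\]
Then, exactly as in the proof of Corollary~\ref{cor:stima_Lp}, I would interpolate: for each $n$,
\[
\|\tilde u_n^\varepsilon-u(\cdot,n\dt)\|_{L^p(\T^d)}
\le \|\tilde u_n^\varepsilon-u(\cdot,n\dt)\|_{L^1(\T^d)}^{1/p}\;
\|\tilde u_n^\varepsilon-u(\cdot,n\dt)\|_{L^\infty(\T^d)}^{1-1/p}
\le C\Big(\dt+\frac h\dt\Big)^{1/p}\Big(\dt^{1/2}+\frac h\dt\Big)^{1-1/p}.
\]
Taking the supremum over $n$ gives the stated bound.

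There is essentially no serious obstacle here: the corollary is a bookkeeping combination of results already proved, and the only points requiring a word of care are that $\T^d$ has finite (unit) measure so $\|\cdot\|_{L^1}\le\|\cdot\|_{L^\infty}$ on it, and that the interpolation inequality $\|f\|_{L^p}\le\|f\|_{L^1}^{1/p}\|f\|_{L^\infty}^{1-1/p}$ is applied with a single fixed $n$ before taking the supremum (so that the constants remain independent of $\varepsilon$ and $n$). One may also note that the $p=1$ and $p=\infty$ endpoints of the second formula reduce to the first formula and to the $L^\infty$ bound respectively, which is a useful consistency check.
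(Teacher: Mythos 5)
Your proposal is correct and is exactly the argument the paper intends (the paper gives no written proof, only the remark that the corollary follows by combining Theorem~\ref{thm:stima_L1_semilag}, the $L^\infty$ rate, and Proposition~\ref{prop:stima_semi_fully_semilag}): triangle inequality plus $\|\cdot\|_{L^1(\T^d)}\le\|\cdot\|_{L^\infty(\T^d)}$ gives the $L^1$ bound, and interpolating the combined $L^1$ and $L^\infty$ errors reproduces the stated $L^p$ bound exactly. No gaps.
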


\section*{Acknowledgments}
The authors thank Alessandro Goffi for his useful suggestions.
Fabio Camilli is a member of GNAMPA–INdAM and acknowledges support from the PRIN-PNRR 2022 project “\textit{Some mathematical approaches to climate change and its impacts}”.

%%%%%%%%%%%%%%%%%%%%%%%%
%   Appendix           %
%%%%%%%%%%%%%%%%%%%%%%%%
\appendix

\section{Proofs of technical lemmas}
\begin{proof}[Proof of Prop. \ref{prop:properties_scheme}]	
	\noindent\textbf{(1)}
Let
\(w_n^{(i)}(x):=\delta^{(i)}_hu_n^\varepsilon(x)\), and \(R:=\|\delta_h g\|_{L^\infty(\T^d)}=\max_{1\le i\le d}\|\delta^{(i)}_h g\|_{L^\infty(\T^d)}.
\)
We prove the result by induction on \(n\). The base case \(n=0\) is immediate. Assume the claim holds at time \(n\).
Fix a direction \(i\). By construction \[u_n^\varepsilon(x+h e_i)-u_n^\varepsilon(x)=h\,w_n^{(i)}(x)\leq hR.\] Therefore \[u_n^\varepsilon(x+h e_i)\le u_n^\varepsilon(x) + hR.\]

Consider the discrete evolution operator \(G\) given by the scheme:
\[
u_{n+1}^\varepsilon(y)=G(u_n^\varepsilon(y), \{u_n^\varepsilon(y \pm h e_i)\}_{i=1}^d)
:=u_n^\varepsilon(y)-\Delta t\,F\big(-\delta_h u_n^\varepsilon(y),\;\delta_{-h}u_n^\varepsilon(y)\big).
\]
 By monotonicity of \(G\), and since adding a constant leaves the finite differences unchanged,
\[
G(u_n^\varepsilon(x+h e_i), \{u_n^\varepsilon(x+h e_i \pm h e_j)\}_{j=1}^d)\le G(u_n^\varepsilon(x), \{u_n^\varepsilon(x \pm h e_j)\}_{j=1}^d) + hR.
\] 

Hence
\[
u_{n+1}^\varepsilon(x+h e_i)\le u_{n+1}^\varepsilon(x) + hR.
\]
Equivalently,
\[
\frac{u_{n+1}^\varepsilon(x+h e_i)-u_{n+1}^\varepsilon(x)}{h}\le R,
\]
that is
\[
 w_{n+1}^{(i)}(x)\le R
\]
for every \(x\). Since \(i\) was arbitrary we obtain
\(\max_{1\le i\le d}\sup_x w_{n+1}^{(i)}(x)\le R\). Similarly, to prove the lower bound, we assume by induction that \(w_n^{(i)}(x) \ge -R\).
Calculations imply \(w_{n+1}^{(i)}(x) \ge -R\).
Combining the two bounds, we have
\[
\max_{1\le i\le d}\|w_{n+1}^{(i)}\|_{L^\infty(\T^d)} \le R.
\]
Since $g$ is Lipschitz, it is a.e. differentiable and $R\le\|Dg\|_{L^\infty(\T^d)}$. Using also the previously obtained inequality on the discrete derivatives of $u_n^\varepsilon$, we have that $u_n^\varepsilon$ is also Lipschitz, and therefore
\[
\|D u_n^\varepsilon\|_{L^\infty(\T^d)}\le\|D g\|_{L^\infty(\T^d)}.
\]
		\noindent\textbf{(2)}
Let
\(
w_n(x):=u_n^\varepsilon(x)+n\Delta t\,H(0),
\)
so that \(\delta_{\pm h}w_n=\delta_{\pm h}u_n^\varepsilon\). By the discrete Lipschitz bound proved in part (1) there exists \(R>0\) such that for every \(n\) and every coordinate \(i\)
\(
\sup_{x\in\T^d}|\delta^{(i)}_{\pm h} w_n(x)| \le R.
\)

From the scheme \eqref{HJ_app_d} we have pointwise for every \(x\in\T^d\)
\[
w_{n+1}(x)-w_n(x)
= -\Delta t\Big(F\big(-\delta_h w_n(x),\delta_{-h} w_n(x)\big)-F(0,0)\Big),
\]

Apply the multivariate mean value theorem to \(F:\R^{2d}\to\R\) on the segment joining \((0,0)\) and \(\big(-\delta_h w_n(x),\delta_{-h} w_n(x)\big)\). There exists \(\xi(x)\in\R^{2d}\) on that segment such that
\[
\begin{aligned}
F\big(-\delta_h w_n(x),\delta_{-h} w_n(x)\big)-F(0,0)
&= DF(\xi(x))\cdot\big(-\delta_h w_n(x),\delta_{-h} w_n(x)\big)\\
\end{aligned}
\]

Substituting into the equation of interest, yields the identity
\[
w_{n+1}(x)-w_n(x)
= -\frac{\Delta t}{h}\sum_{i=1}^d\Big[ F_{p_i}(\xi(x))\big(-w_n(x+h e_i)+w_n(x)\big)+ F_{q_i}(\xi(x))\big(w_n(x)-w_n(x-h e_i)\big)\Big],
\]
Therefore,
\[
\begin{aligned}
w_{n+1}(x)
&= \frac{\Delta t}{h}\sum_{i=1}^d F_{p_i}(\xi(x))\,w_n(x+h e_i)
+ \Big(1-\frac{\Delta t}{h}\sum_{i=1}^d\big(F_{p_i}(\xi(x))+F_{q_i}(\xi(x))\big)\Big) w_n(x)\\
&\qquad + \frac{\Delta t}{h}\sum_{i=1}^d F_{q_i}(\xi(x))\,w_n(x-h e_i).
\end{aligned}
\]
Since \(F_{p_i},F_{q_i}\ge0\), and the CFL condition holds, the three coefficients are nonnegative and sum to \(1\). It follows that, for each fixed \(x\),
	$$
		w_{n+1}(x)\le \max\{w_n(x),w_n(x\pm he_1),...,w_n(x\pm he_d)\}\le \sup_y w_n(y),
	$$
	hence \(\sup_x w_{n+1}(x)\le \sup_x w_n(x)\). 
	Similarly \(w_{n+1}(x)\ge \min\{w_n(x),w_n(x\pm he_1),...,w_n(x\pm he_d)\}\ge \inf_y w_n(y)\). Iterating we get \(\|w_n\|_{L^\infty(\T^d)}\le \|w_0\|_{L^\infty(\T^d)}=\|g\|_{L^\infty(\T^d)}\), and therefore
\[
\|u_n^\varepsilon\|_{L^\infty(\T^d)}
\le \|w_n\|_{L^\infty(\T^d)} + n\Delta t\,|H(0)|
\le \|g\|_{L^\infty(\T^d)} + n\Delta t\,|H(0)|.
\]

	\medskip
	\noindent\textbf{(3)}
	Let \(w_n(x):=\Delta_k u^\varepsilon_n(x)/|k|^2\), that is, the quantity we want to estimate \ref{semiconc_solutions_finite_diff}.
	Introducing
	\(
	A_n(y):=\big(-\delta_h u^\varepsilon_n(y),\; \delta_{-h}u^\varepsilon_n(y)\big)
	\), evaluating the scheme \eqref{HJ_app_d} at the points \(y=x-k,x,x+k\), and suitably combining the three relations, we get
	\begin{equation}\label{eq:sn_evol}
		w_{n+1}(x)-w_n(x)
		= -\frac{\Delta t}{|k|^2}\Big( F\big(A_n(x+k)\big)-2F\big(A_n(x)\big)+F\big(A_n(x-k)\big)\Big).
	\end{equation}
	
	Since \(F\in C^2(\mathbb R^{2d})\) is convex, we have for any vectors \(X,Y,Z\in\mathbb R^{2d}\) \(F(X)-F(Y)\ge D F(Y)\cdot (X-Y)\) and \(F(Z)-F(Y)\ge D F(Y)\cdot (Z-Y)\). Therefore,
	\[
	F(X)-2F(Y)+F(Z)\ge D F(Y)\cdot (X-2Y+Z).
	\]
	Applying this inequality with
	\(X=A_n(x+k),\,Y=A_n(x),\,Z=A_n(x-k)\) in \eqref{eq:sn_evol} gives the estimate
	\[
		w_{n+1}(x)-w_n(x)
		\le -\frac{\Delta t}{|k|^2}\,D F\big(A_n(x)\big)\cdot\big(A_n(x+k)-2A_n(x)+A_n(x-k)\big)\]
Using \(\Delta_k u_n^\varepsilon=|k|^2 w_n\), and \(
\Delta_k\big(\delta^{(i)}_h u_n^\varepsilon\big)(x)
= \delta^{(i)}_h\big(\Delta_k u_n^\varepsilon\big)(x)
\), we get
\[
\Delta_k\big(\delta^{(i)}_h u_n^\varepsilon\big)(x)
=|k|^2\delta^{(i)}_h w_n(x),\qquad
\Delta_k\big(\delta^{(i)}_{-h} u_n^\varepsilon\big)(x)=|k|^2\delta^{(i)}_{-h} w_n(x),
\]
so that
\[
A_n(x+k)-2A_n(x)+A_n(x-k)=\Delta_k A_n(x)=|k|^2\big(-\delta_h w_n,\;\delta_{-h} w_n\big)(x).
\]
Therefore,
\[
w_{n+1}(x)-w_n(x)
\le -\Delta t\sum_{i=1}^d\big(-F_{p_i}(A_n(x))\,\delta^{(i)}_h w_n(x)
+F_{q_i}(A_n(x))\,\delta^{(i)}_{-h} w_n(x)\big).
\]
Writing \(\delta^{(i)}_{\pm h}w_n\) explicitly gives
\[
w_{n+1}(x)
\le w_n(x) -\frac{\Delta t}{h}\sum_{i=1}^d\Big(-F_{p_i}w_n(x+h e_i)+(F_{p_i}+F_{q_i})w_n(x)-F_{q_i}w_n(x-h e_i)\Big)
\]
\[
=w_n(x)\bigg(1 -\frac{\Delta t}{h}\sum_{i=1}^d(F_{p_i}+F_{q_i})\bigg)+\frac{\Delta t}{h}\sum_{i=1}^dF_{p_i}w_n(x+h e_i) +\frac{\Delta t}{h}\sum_{i=1}^dF_{q_i}w_n(x-h e_i).
\]
Since the scheme satisfies the monotonicity and CFL conditions, then the coefficients in the previous inequality are nonnegative and sum to $1$. Hence,
\[
w_{n+1}(x)\le\max\{w_n(x),\,w_n(x\pm h e_i)\}\le\sup_{y} w_n(y).
\]
Taking the supremum in $x$ yields $\sup_x w_{n+1}(x)\le\sup_x w_n(x)$, therefore the sequence $\{\sup_x w_n\}_n$ is nonincreasing. Consequently
\[
\sup_{x\in\T^d,\;k\in\T^d\setminus\{0\}}\frac{\Delta_k u_n^\varepsilon(x)}{|k|^2}
\le \sup_{x\in\T^d,\;k\in\T^d\setminus\{0\}} w_0(x)
= \sup_{x\in\T^d,\;k\in\T^d\setminus\{0\}}\frac{\Delta_k g(x)}{|k|^2}
\le C_{\mathrm{conc}}.
\]
\end{proof}
%%%%%%%%%%%%%%%%%%%%%%%%%%
\begin{proof}[Proof of Lemma \ref{lemma:finite_difference}]
	Fix a direction \(i\). We have, for every \(x\in\T^d\),
	\[
	\delta_{h}^{(i)} v(x) - D_i v(x)
	= \frac{1}{h}\int_0^h\big(D_i v(x+s e_i)-D_i v(x)\big)\,ds
	= \frac{1}{h}\int_0^h\int_0^s D_{ii} v(x+\tau e_i)\,d\tau\,ds.
	\]
	Changing variables $\tau=\theta h$, $s=\sigma h$ yields the representation
	\[
	\delta_{h}^{(i)} v(x) - D_i v(x)
	= h\int_0^1 (1-\theta)\,D_{ii} v(x+\theta h e_i)\,d\theta.
	\]
	Therefore the pointwise estimate
	\[
	\big|\delta_{h}^{(i)} v(x) - D_i v(x)\big|
	\le h\int_0^1 \big|D_{ii} v(x+\theta h e_i)\big|\,d\theta
	\]
	holds. Integrating over \(\T^d\),
	\[
	\int_{\T^d}\big|\delta_{h}^{(i)} v(x) - D_i v(x)\big|\,dx
	\le h\int_0^1\int_{\T^d}\big|D_{ii} v(x+\theta h e_i)\big|\,dx\,d\theta
	= h\int_{\T^d}\big|D_{ii} v(y)\big|\,dy.
	\]
	To bound $\int_{\T^d}|D_{ii} v|$, see Lemma \ref{lemma:bound_hessian} .
\end{proof}

	\end{document}